\numberwithin{equation}{section}
\newtheorem{theorem}{Theorem}[section]
\newaliascnt{lemma}{theorem}
\newtheorem{lemma}[lemma]{Lemma}
\newaliascnt{corollary}{theorem}
\newtheorem{corollary}[corollary]{Corollary}
\newaliascnt{proposition}{theorem}
\newtheorem{proposition}[proposition]{Proposition}
\newaliascnt{claim}{theorem}
\newaliascnt{conjecture}{theorem}
\theoremstyle{definition}
\newaliascnt{definition}{theorem}
\newtheorem{definition}[definition]{Definition}
\newaliascnt{problem}{theorem}
\newaliascnt{example}{theorem}
\newtheorem{example}[example]{Example}
\newaliascnt{remark}{theorem}
\newtheorem{remark}[remark]{Remark}
\crefname{section}{Section}{Sections}
\crefname{appendix}{Appendix}{Appendices}
\crefname{theorem}{Theorem}{Theorems}
\crefname{lemma}{Lemma}{Lemmas}
\crefname{corollary}{Corollary}	{Corollaries}			
\crefname{proposition}{Proposition}{Propositions}	
\crefname{claim}{Claim}{Claims}
\crefname{conjecture}{Conjecture}{Conjectures}			
\crefname{definition}{Definition}{Definitions}
\crefname{problem}{Problem}{Problems}
\crefname{example}{Example}{Examples}
\crefname{remark}{Remark}{Remarks}
\crefname{figure}{Figure}{Figures}
\crefname{footnote}{Footnote}{Footnotes}
\crefname{equation}{}{}
\crefname{enumi}{}{}
\newcommand{\QED}{\hfill \ensuremath{\Box}}
\newcommand{\R}{\mathbb{R}}
\newcommand{\ld}{,\ldots,}
\newcommand{\ep}{\varepsilon}
\newcommand{\wt}{\widetilde}
\newcommand{\HD}{\mathrm{HD}}
\newfont{\bg}{cmr9 scaled\magstep2}
\newcommand{\bigzerol}{\smash{\lower1.0ex\hbox{\bg 0}}}
\DeclareMathOperator{\rank}{rank}
\DeclareMathOperator{\img}{Im}
\DeclareMathOperator{\codim}{codim}
\DeclareMathOperator{\id}{id}
\DeclareMathOperator{\diam}{diam}
\title[
Hausdorff measure estimates for crossings
]
{
A transversality theorem for multiple-point crossings under generic linear perturbations with Hausdorff measure estimates
}
\author{Shunsuke Ichiki
}
\address{
Department of Mathematical and Computing Science, 
School of Computing, 
Institute of Science Tokyo, 
Tokyo 152-8552, 
Japan}
\email{ichiki.s.96cb@m.isct.ac.jp}
\begin{document}
\begin{abstract}
We establish a transversality theorem for multiple-point crossings under generic linear perturbations with explicit Hausdorff measure estimates for the exceptional parameter set, and hence explicit upper bounds on its Hausdorff dimension.
This strengthens our earlier result, which showed only that the exceptional parameter set has Lebesgue measure zero.
As applications, we obtain results on normal crossings, injectivity, and embeddings under generic linear perturbations.
The embedding result yields a refinement of Mather's stability theorem for generic projections when the target dimension is more than twice the source dimension, with an explicit upper bound on the Hausdorff dimension of the exceptional set.
\end{abstract}

\subjclass[2020]{Primary 58K30; Secondary 57R45}	 
\keywords{transversality theorem, multiple-point crossing, Hausdorff measure, generic linear perturbation, generic projection} 
\maketitle

\section{Introduction}\label{sec:intro}
Transversality theorems are a fundamental tool in singularity theory and in the study of generic properties of smooth mappings.
In 1973, Mather established a remarkable transversality theorem for generic projections as the main theorem of his celebrated paper \cite{Mather1973}.
Let $\mathcal{L}(\R^m,\R^\ell)$ denote the space of all linear mappings from $\R^m$ to $\R^\ell$, which we identify with the Euclidean space $(\R^m)^\ell$ in the obvious way.
Mather's theorem concerns the composition $\pi\circ f:X\to \R^\ell$ of a $C^\infty$ embedding $f$ from a $C^\infty$ manifold $X$ into $\R^m$ and a linear mapping $\pi\in \mathcal{L}(\R^m,\R^\ell)\setminus\Sigma$, where $\Sigma$ is a subset of $\mathcal{L}(\R^m,\R^\ell)$ with Lebesgue measure zero.
The theorem yields important applications concerning compositions of a $C^\infty$ embedding with generic linear mappings (e.g. \cite[Theorems 2 and 3]{Mather1973}).

Motivated by Mather's parametric transversality theorem for linear mappings, Ichiki \cite{Ichiki2018C} established transversality theorems under generic linear perturbations for $1$-jet extensions and for multiple-point crossings.
More precisely, let $f$ be a $C^\infty$ immersion (resp., a $C^\infty$ injection) of a $C^\infty$ manifold $X$ into an open subset $V$ of $\R^m$, and let $g:V\to \R^\ell$ be an arbitrary $C^\infty$ mapping.
Then Ichiki \cite{Ichiki2018C} proved a transversality theorem for the $1$-jet extension (resp., for multiple-point crossings) of the composition $(g+\pi)\circ f:X\to \R^\ell$, where $\pi\in \mathcal{L}(\R^m,\R^\ell)\setminus\Sigma$ and $\Sigma$ is a subset of $\mathcal{L}(\R^m,\R^\ell)$ of Lebesgue measure zero.
Moreover, in \cite{Ichiki2018T}, both of these transversality theorems were extended to the finite differentiability case.

Most parametric transversality theorems, including those discussed above in \cite{Mather1973,Ichiki2018C,Ichiki2018T}, describe the exceptional parameter set only as a set of Lebesgue measure zero.
Geometrically, however, the information that the exceptional set has Lebesgue measure zero is too coarse.
For example, a countable subset of $\R$ and the Cantor set both have Lebesgue measure zero, although their Hausdorff dimensions are very different.
Likewise, in $\R^3$, a curve and a surface both have Lebesgue measure zero, but their Hausdorff dimensions are $1$ and $2$, respectively.
Thus, Hausdorff measure estimates give finer information on the size of exceptional sets, in particular through explicit upper bounds on their Hausdorff dimensions.

A refinement of Thom's parametric transversality theorem in terms of Hausdorff measure was given in \cite{Ichiki2022a}.
Using this refinement, Ichiki \cite{Ichiki2024b} established a transversality theorem for $1$-jet extensions under generic linear perturbations with Hausdorff measure estimates.
In this paper, we establish a transversality theorem for multiple-point crossings under generic linear perturbations with Hausdorff measure estimates (see \cref{thm:main}).
As applications, we obtain results on normal crossings, injectivity, and embeddings.

In the range $\ell>2\dim X$, our embedding result yields a refinement of Mather's stability theorem for generic projections with an explicit upper bound on the Hausdorff dimension of the exceptional set.
In this range, Mather's theorem states that if $f$ is a $C^\infty$ embedding of a compact $C^\infty$ manifold $X$ into $\R^m$, then the set
\begin{align*}
\set{\pi\in \mathcal{L}(\R^m,\R^\ell)\mid \text{$\pi\circ f:X\to \R^\ell$ is not an embedding}}
\end{align*}
has Lebesgue measure zero in $\mathcal{L}(\R^m,\R^\ell)$.
In the same range, our result refines Mather's theorem as follows.
Let $f$ be a $C^r$ embedding of a compact $C^r$ manifold $X$ $(r\ge2)$ into an open subset $V$ of $\R^m$, and let $g:V\to \R^\ell$ be a $C^r$ mapping, where $\ell>2\dim X$.
Then the Hausdorff dimension of the set
\begin{align*}
\set{\pi\in \mathcal{L}(\R^m,\R^\ell)\mid \text{$(g+\pi)\circ f:X\to \R^\ell$ is not an embedding}}
\end{align*}
is bounded above by $m\ell+2\dim X-\ell$.
Moreover, we give an example showing that this bound is sharp in general (see \cref{ex:embedding}).
In the special case where $g=0$ and $r=\infty$, this recovers Mather's theorem in the range $\ell>2\dim X$.
Thus, in this range, our result goes beyond Mather's theorem in three respects.
First, it replaces a generic linear mapping $\pi$ with a generic linear perturbation $g+\pi$ of an arbitrary given mapping $g$.
Second, it weakens the differentiability assumption from $C^\infty$ to $C^r$ $(r\ge 2)$.
Third, it provides explicit Hausdorff measure estimates for the exceptional set, and hence an explicit upper bound on its Hausdorff dimension.

The paper is organized as follows.
In \cref{sec:main}, we state the main theorem.
In \cref{sec:preparation}, we review the definitions of Hausdorff measure and Hausdorff dimension and recall the refinement of Thom's parametric transversality theorem in terms of Hausdorff measure in \cite{Ichiki2022a}.
The proof of the main theorem is given in \cref{sec:main_proof}.
Finally, \cref{sec:app} is devoted to applications of the main theorem, including results on normal crossings and injectivity.
In the range $\ell>2\dim X$, we also obtain an embedding result, which yields the refinement of Mather's theorem discussed above.
\section{Main theorem}\label{sec:main}
In this section, we fix notation for multiple-point crossings and state the main theorem.
Throughout this paper, unless otherwise stated, all manifolds are assumed to be without boundary and second countable.
\begin{definition}\label{def:transversality}
\upshape
Let $X$ and $Y$ be $C^r$ manifolds, and let $Z$ be a $C^r$ submanifold of $Y$ $(r\ge 1)$.
Let $f : X\to Y$ be a $C^1$ mapping. 
\begin{enumerate}[(1)]
\item 
We say that $f:X\to Y$ is \emph{transverse} to $Z$ \emph{at $x\in X$} if either $f(x)\notin Z$, or $f(x)\in Z$ and
\begin{align*}
df_x(T_xX)+T_{f(x)}Z=T_{f(x)}Y.
\end{align*}
\item 
We say that $f:X\to Y$ is \emph{transverse} to $Z$ if $f$ is transverse to $Z$ at every $x\in X$. 
\end{enumerate}
\end{definition}
Let $X$ be a $C^r$ manifold $(r\ge 1)$.
Set 
\begin{align*}
X^{(d)}=\set{(q_1\ld q_d)\in X^d|\text{$q_i\ne q_j$ if $i\ne j$}}.
\end{align*}
Note that $X^{(d)}$ is an open submanifold of $X^d$. 
For a mapping $f:X\to \R^\ell$, let $f^{(d)}:X^{(d)}\to (\R^\ell)^d$ be the mapping given by 
\begin{align*}
f^{(d)}(q_1\ld q_d)=(f(q_1)\ld f(q_d)). 
\end{align*}
Set 
\begin{align*}
    \Delta_d=\set{(y\ld y) \in (\R^\ell)^d \mid y\in \R^\ell}. 
\end{align*}
Then, $\Delta_d$ is a submanifold of $(\R^\ell)^d$ satisfying 
\begin{align*}
\codim \Delta_d =\dim\,(\R^\ell)^d- 
\dim \Delta_d  =\ell (d-1).
\end{align*}

As in \cite{Ichiki2018C}, for an injection $f:X\to \R^m$, define 
\begin{align*}
d_f=\max\Set{d|\forall (q_1\ld q_d)\in X^{(d)}, 
\dim \sum_{i=2}^d \R\overrightarrow{f(q_1)f(q_i)}=d-1}. 
\end{align*}
Since $f$ is injective, we have $2 \leq d_f$. 
Since $f(q_1)\ld f(q_{d_f})$ are points of $\R^m$, we also have $d_f\leq m+1$. 
Thus, we obtain  
\begin{align*}
2\leq d_f \leq m+1. 
\end{align*}

In \cite{Ichiki2018T}, the following result on multiple-point crossings under generic linear perturbations was established. 
It states that the exceptional set has Lebesgue measure zero.
\begin{proposition}[\cite{Ichiki2018T}]\label{thm:main_t}
Let $f:X\to V$ be a $C^r$ injection and $g:V\to \R^\ell$ a $C^r$ mapping, where $r$ is a positive integer, $X$ is a $C^r$ manifold and $V$ is an open subset of $\R^m$. 
Suppose that 
\begin{align}\label{eq:main_t_r}
r>\max\set{d(\dim X-\ell)+\ell|2\le d\le d_f}.
\end{align}
Then, for any integer $d$ satisfying $2\le d\le d_f$, the set 
\begin{align*}
\Sigma_d=\set{\pi\in \mathcal{L}(\R^m,\R^\ell)|\text{$((g+\pi)\circ f)^{(d)}$ is not transverse to $\Delta_d$}}
\end{align*}
has Lebesgue measure zero in $\mathcal{L}(\R^{m},\R^{\ell})$.
\end{proposition}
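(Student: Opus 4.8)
I would deduce this from Thom's parametric transversality theorem (see \cite{Golubitsky1973} or \cite{Ichiki2022a}) applied to a single auxiliary ``evaluation'' family. Fix an integer $d$ with $2\le d\le d_f$; we may assume $X^{(d)}\neq\emptyset$, since otherwise $\Sigma_d=\emptyset$ and there is nothing to prove. Consider the $C^r$ mapping
\begin{align*}
\Phi\colon X^{(d)}\times\mathcal{L}(\R^m,\R^\ell)&\longrightarrow (\R^\ell)^d,\\
\Phi(q_1\ld q_d,\pi)&=\bigl((g+\pi)(f(q_1))\ld(g+\pi)(f(q_d))\bigr),
\end{align*}
so that $\Phi(\cdot,\pi)=((g+\pi)\circ f)^{(d)}$ for each fixed $\pi$ (here $\Phi$ is $C^r$ because $f$ and $g$ are $C^r$ and $\pi\mapsto\pi$ is linear). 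The substantive step is to prove that $\Phi$ is transverse to $\Delta_d$.

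To do this I would fix a point $(q_1\ld q_d,\pi)$ with $\Phi(q_1\ld q_d,\pi)\in\Delta_d$ and identify the normal space $(\R^\ell)^d/T\Delta_d$ with $(\R^\ell)^{d-1}$ via the ``difference'' isomorphism $(y_1\ld y_d)\mapsto(y_2-y_1\ld y_d-y_1)$; transversality at that point amounts to surjectivity of the composite of $d\Phi_{(q_1\ld q_d,\pi)}$ with this quotient projection. Differentiating $\Phi$ only in the $\pi$-direction and projecting, a tangent vector $A\in\mathcal{L}(\R^m,\R^\ell)$ is sent to
\begin{align*}
&\bigl(A(f(q_2))-A(f(q_1))\ld A(f(q_d))-A(f(q_1))\bigr)\\
&\qquad=\bigl(A(\overrightarrow{f(q_1)f(q_2)})\ld A(\overrightarrow{f(q_1)f(q_d)})\bigr)\in(\R^\ell)^{d-1}.
\end{align*}
This is exactly where the hypothesis $d\le d_f$ enters: by the definition of $d_f$, the vectors $\overrightarrow{f(q_1)f(q_2)}\ld\overrightarrow{f(q_1)f(q_d)}$ are linearly independent in $\R^m$ for \emph{every} $(q_1\ld q_d)\in X^{(d)}$, so extending them to a basis of $\R^m$ shows that $A\mapsto\bigl(A(\overrightarrow{f(q_1)f(q_i)})\bigr)_{i=2}^{d}$ is onto $(\R^\ell)^{d-1}$. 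Hence the $\pi$-partial derivative of $\Phi$ alone already surjects onto the normal space of $\Delta_d$, and $\Phi$ is transverse to $\Delta_d$.

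With transversality of $\Phi$ established, I would invoke Thom's parametric transversality theorem with parameter space $\mathcal{L}(\R^m,\R^\ell)$, source $X^{(d)}$, target $(\R^\ell)^d$, and the $C^r$ submanifold $\Delta_d$ of codimension $\ell(d-1)$. Its differentiability requirement is
\begin{align*}
r>\dim X^{(d)}-\codim\Delta_d=d\dim X-\ell(d-1)=d(\dim X-\ell)+\ell,
\end{align*}
which holds for each $d$ with $2\le d\le d_f$ by \eqref{eq:main_t_r}. The theorem then gives that the set of $\pi$ for which $\Phi(\cdot,\pi)=((g+\pi)\circ f)^{(d)}$ is not transverse to $\Delta_d$ — that is, $\Sigma_d$ — has Lebesgue measure zero in $\mathcal{L}(\R^m,\R^\ell)$.

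I expect the only real obstacle to be organizing the transversality computation cleanly — keeping careful track of the normal bundle of the thin diagonal $\Delta_d$, of which partial derivative is used, and, crucially, of the fact that the defining property of $d_f$ delivers surjectivity of $A\mapsto\bigl(A(\overrightarrow{f(q_1)f(q_i)})\bigr)_i$ \emph{uniformly} over $X^{(d)}$, so that $\Phi$ is transverse to $\Delta_d$ globally. The bookkeeping check that the numerical condition \eqref{eq:main_t_r} is exactly the hypothesis of the parametric transversality theorem is then routine.
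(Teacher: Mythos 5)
Your proposal is correct and follows essentially the same route as the paper: the paper's proof of \cref{thm:main} (which implies \cref{thm:main_t}) builds the same evaluation family $\Gamma(q,\pi)=(((g+\pi)\circ f)(q_1)\ld((g+\pi)\circ f)(q_d))$, establishes its transversality to $\Delta_d$ by exactly your linear-algebra argument (the Jacobian blocks $B(x_i)-B(x_1)$ encode $A\mapsto A(\overrightarrow{f(q_1)f(q_i)})$, and the definition of $d_f$ supplies the linear independence needed for surjectivity onto the normal space of $\Delta_d$), and then applies a parametric transversality theorem with the same differentiability count $r>\dim X^{(d)}-\codim\Delta_d$. The only difference is that the paper invokes the refined Hausdorff-measure version (\cref{thm:main_lem}) to get the stronger \cref{thm:main}, whereas you use classical Thom, which suffices for the Lebesgue-measure statement at hand.
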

Note that \cite[Theorem~2]{Ichiki2018C} is \cref{thm:main_t} in the case where all manifolds and mappings are of class $C^\infty$.
Thus, \cref{thm:main_t} extends \cite[Theorem~2]{Ichiki2018C} to the finitely differentiable case.
We now state the main theorem, which refines \cref{thm:main_t} by replacing the Lebesgue measure zero conclusion with Hausdorff measure estimates.
\begin{theorem}\label{thm:main}
Let $f:X\to V$ be a $C^r$ injection and $g:V\to \R^\ell$ a $C^r$ mapping, where $r$ is a positive integer, $X$ is a $C^r$ manifold and $V$ is an open subset of $\R^m$. 
Set
\begin{align*}
\Sigma_d=\set{\pi\in \mathcal{L}(\R^m,\R^\ell)|\text{$((g+\pi)\circ f)^{(d)}$ is not transverse to $\Delta_d$}},
\end{align*}
where $d$ is an integer satisfying $2\leq d \leq d_f$.
Then the following statements hold.
\begin{enumerate}[$(1)$]
\item \label{thm:mainsub1}
Suppose $\dim X^{(d)}-\codim \Delta_d\geq 0$.
Then, for any real number $s$ satisfying 
\begin{align}\label{eq:main1}
s\geq m\ell-1+\frac{\dim X^{(d)}-\codim \Delta_d+1}{r}, 
\end{align}
the set $\Sigma_d$ has $s$-dimensional Hausdorff measure zero in $\mathcal{L}(\R^{m},\R^{\ell})$.
\item \label{thm:mainsub2}
Suppose $\dim X^{(d)}-\codim \Delta_d<0$. 
Then, the following hold:
\begin{enumerate}[\upshape(2a)]
\item\label{thm:mainsub2a}
For any real number $s$ satisfying 
\begin{align}\label{eq:main2}
s>m\ell+\dim X^{(d)}-\codim \Delta_d, 
\end{align}
the set $\Sigma_d$ has $s$-dimensional Hausdorff measure zero in $\mathcal{L}(\R^{m},\R^{\ell})$.
\item\label{thm:mainsub2b}
For any $\pi\in \mathcal{L}(\R^{m},\R^{\ell})\setminus\Sigma_d$, we have $((g+\pi) \circ f)^{(d)}(X^{(d)})\cap \Delta_d=\emptyset$.
\end{enumerate}
\end{enumerate}
\end{theorem}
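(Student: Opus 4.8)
The plan is to deduce \cref{thm:main} from the refined parametric transversality theorem of \cite{Ichiki2022a} applied to a cleverly chosen family of mappings, following the same architecture as the proof of \cref{thm:main_t} in \cite{Ichiki2018T} but keeping track of Hausdorff dimensions rather than merely Lebesgue null sets. First I would fix an integer $d$ with $2\le d\le d_f$ and introduce the parametrized mapping
\begin{align*}
F\colon X^{(d)}\times \mathcal{L}(\R^m,\R^\ell)\to (\R^\ell)^d,\qquad
F(q_1\ld q_d,\pi)=((g+\pi)\circ f)^{(d)}(q_1\ld q_d).
\end{align*}
The crucial point, which is exactly where the hypothesis $d\le d_f$ enters, is to show that $F$ is transverse to $\Delta_d$ as an unrestricted mapping. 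For a fixed point $(q_1\ld q_d)\in X^{(d)}$ with $F(q_1\ld q_d,\pi)\in\Delta_d$, one differentiates in the $\pi$-direction: a perturbation $\pi\mapsto\pi+\lambda$ changes the $i$-th component of $F$ by $\lambda(f(q_i))$, so the image of the partial derivative $d(F|_{\{(q_1\ld q_d)\}\times\mathcal{L}})$ is the set $\{(\lambda(f(q_1))\ld\lambda(f(q_d)))\mid\lambda\in\mathcal{L}(\R^m,\R^\ell)\}$. Modulo $\Delta_d$ (i.e. after subtracting the first component from the others) this becomes $\{(\lambda(\overrightarrow{f(q_1)f(q_2)})\ld\lambda(\overrightarrow{f(q_1)f(q_d)}))\}$, and the definition of $d_f$ says precisely that the vectors $\overrightarrow{f(q_1)f(q_2)}\ld\overrightarrow{f(q_1)f(q_d)}$ are linearly independent in $\R^m$; hence $\lambda$ can be chosen to send them to any prescribed tuple in $(\R^\ell)^{d-1}$, which surjects onto $(\R^\ell)^d/\Delta_d$. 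Thus $F\pitchfork\Delta_d$, and in particular $F^{-1}(\Delta_d)$ is a $C^r$ submanifold of $X^{(d)}\times\mathcal{L}(\R^m,\R^\ell)$ of codimension $\codim\Delta_d=\ell(d-1)$.

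Next I would apply the refined Thom parametric transversality theorem of \cite{Ichiki2022a} to the projection $\Pi\colon F^{-1}(\Delta_d)\to\mathcal{L}(\R^m,\R^\ell)$. The standard transversality-theoretic fact is that $((g+\pi)\circ f)^{(d)}$ fails to be transverse to $\Delta_d$ exactly when $\pi$ is a critical value of $\Pi$; that is, $\Sigma_d$ is the set of critical values of the $C^r$ map $\Pi$. The dimension of the source manifold $F^{-1}(\Delta_d)$ is $\dim X^{(d)}+m\ell-\codim\Delta_d$, and the target has dimension $m\ell$. In the case $\dim X^{(d)}-\codim\Delta_d\ge 0$ the refined Thom theorem bounds the $s$-dimensional Hausdorff measure of the critical values by zero whenever $s\ge m\ell-1+\frac{\dim F^{-1}(\Delta_d)-m\ell+1}{r}$, and substituting $\dim F^{-1}(\Delta_d)-m\ell=\dim X^{(d)}-\codim\Delta_d$ gives exactly \eqref{eq:main1}; in the case $\dim X^{(d)}-\codim\Delta_d<0$ the source manifold has dimension strictly less than $m\ell$, so the image $\Pi(F^{-1}(\Delta_d))$ — which contains $\Sigma_d$ and, by the argument below, actually equals it — already has Hausdorff dimension at most $\dim F^{-1}(\Delta_d)=m\ell+\dim X^{(d)}-\codim\Delta_d$, yielding \eqref{thm:mainsub2a}. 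I would need to be a little careful about which version of \cite{Ichiki2022a} gives the sharp exponent and about the hypothesis $\dim X^{(d)}-\codim\Delta_d+1>0$ (equivalently $\ge 0$, the case hypothesis) needed so the fraction in \eqref{eq:main1} makes sense; the second case needs only the crude ``image of a $C^1$ map from a lower-dimensional manifold has zero measure / bounded Hausdorff dimension'' estimate, which is also recorded in \cite{Ichiki2022a}.

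For part \eqref{thm:mainsub2b} the observation is that when $\dim X^{(d)}-\codim\Delta_d<0$, transversality of $((g+\pi)\circ f)^{(d)}$ to $\Delta_d$ at a point of $F^{-1}(\Delta_d)$ would force $\codim\Delta_d\le\dim X^{(d)}$, a contradiction; hence transversality can hold only vacuously, i.e. only when $((g+\pi)\circ f)^{(d)}(X^{(d)})\cap\Delta_d=\emptyset$. This simultaneously shows that in this case $\pi\notin\Sigma_d$ is equivalent to the image missing $\Delta_d$, which is the stated conclusion. I expect the main obstacle to be purely bookkeeping: matching the dimension count $\dim X^{(d)}-\codim\Delta_d = d\dim X - \ell(d-1)$ to the hypothesis \eqref{eq:main_t_r}-style differentiability condition implicit in \eqref{eq:main1}, and verifying that the refined parametric transversality theorem of \cite{Ichiki2022a} applies to the (non-proper, manifold-with-corners-free but merely $C^r$) projection $\Pi$ — in particular that $F^{-1}(\Delta_d)$ inherits a genuine $C^r$ manifold structure and a countable basis, so that the Hausdorff-measure conclusion is legitimate. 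The transversality computation for $F$ itself is routine once the role of $d_f$ is identified, so the real content is entirely in invoking \cite{Ichiki2022a} correctly with the right dimension substitutions.
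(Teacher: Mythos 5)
Your proposal is correct and follows essentially the same route as the paper: form the parametrized family $\Gamma(q,\pi)=(((g+\pi)\circ f)(q_1)\ld ((g+\pi)\circ f)(q_d))$, prove it is transverse to $\Delta_d$ by differentiating in the $\pi$-direction and using that $d\le d_f$ makes the vectors $\overrightarrow{f(q_1)f(q_i)}$ linearly independent (the paper does this via an explicit Jacobian/row-reduction computation equivalent to your coordinate-free argument), and then invoke the refined parametric transversality theorem of \cite{Ichiki2022a} with the dimension count $\delta^*(\Gamma,\Delta_d)=\dim X^{(d)}-\codim\Delta_d$. The only cosmetic difference is that you unpack that lemma into its classical ``preimage submanifold plus refined Sard for the projection'' mechanism, whereas the paper applies it as stated through the $\delta$, $W$, $\delta^*$ formalism (noting $\delta(\Gamma,\Delta_d)=0$ forces $W(\Gamma,\Delta_d)=\emptyset$).
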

\begin{remark}\label{rem:main}
We give several remarks on \cref{thm:main}.
\begin{enumerate}[(1)]
\item 
\cref{thm:main} implies \cref{thm:main_t} as follows:
Let $f$ and $g$ be mappings satisfying the assumption of \cref{thm:main_t}.
Let $d$ be any integer satisfying $2\le d\le d_f$.
First, we consider the case $\dim X^{(d)}-\codim \Delta_d\geq 0$.
Since $r$ satisfies \cref{eq:main_t_r}, we have 
\begin{align*}
    r>\dim X^{(d)}-\codim \Delta_d.
\end{align*}
Therefore, we can set $s=m\ell$ in \cref{eq:main1}.
Since $\Sigma_d$ has $m\ell$-dimensional Hausdorff measure zero in $\mathcal{L}(\R^{m},\R^{\ell})$ by \cref{thm:main}~\cref{thm:mainsub1}, $\Sigma_d$ also has Lebesgue measure zero.
In the case $\dim X^{(d)}-\codim \Delta_d<0$, since we can set $s=m\ell$ in \cref{eq:main2}, $\Sigma_d$ has $m\ell$-dimensional Hausdorff measure zero in $\mathcal{L}(\R^{m},\R^{\ell})$ by \cref{thm:main}~\cref{thm:mainsub2a}, which implies that $\Sigma_d$ has Lebesgue measure zero.
\item \label{rem:infty}
In \cref{thm:main}~\cref{thm:mainsub1}, if all manifolds and mappings are of class $C^\infty$, then for any real number $s$ such that $s>m\ell-1$, there exists a sufficiently large positive integer $r$ satisfying \cref{eq:main1}.
Thus, in the $C^\infty$ case, we can replace \cref{eq:main1} by
\begin{align*}
    s>m\ell-1.
\end{align*}
\item 
In \cref{thm:main}~\cref{thm:mainsub2a}, since 
\begin{align*}
m\ell+\dim X^{(d)}-\codim \Delta_d
=m\ell+d\dim X-\ell(d-1)
\geq d\dim X, 
\end{align*}
it is not necessary to assume that $s$ is non-negative.
Note that the last inequality in the above expression follows from the fact that $2\le d$ $(\le d_f)\le m+1$.
\item 
The assumptions \cref{eq:main1} in the $C^\infty$ case (i.e. $s>m\ell-1$) and \cref{eq:main2} cannot be improved in general (see \cref{rem:main_evaluation_1} and \cref{rem:main_evaluation_2}, respectively), which implies that these estimates are sharp in general.
However, it is still an open question whether \cref{eq:main1} is sharp in the $C^r$ case $(r<\infty)$.
\item 
As in \cite{Ichiki2018C}, one has $d_f=3$ in the following situation. 
If $X=S^n$ and $f:S^n\to \R^m$ ($n+1 \leq m$) is the inclusion $f(x)=(x,0\ld 0)$, then $d_f=3$, where $S^n$ is the $n$-dimensional unit sphere centered at the origin $(n\ge 1)$.
Indeed, suppose that there exists a point $(q_1, q_2, q_3)\in (S^n)^{(3)}$ satisfying $\dim \sum_{i=2}^3 \R\overrightarrow{f(q_1)f(q_i)}=1$. 
Since any affine line in $\R^m$ meets $f(S^n)$ in at most two points, this contradicts the assumption. 
Thus, we have $d_f\geq3$. 
On the other hand, since $S^1\times \set{0} \subset f(S^n)$, we get $d_f<4$, where $0=\underbrace{(0\ld 0)}_{(m-2)\text{-tuple}}$. 
Therefore, $d_f=3$. 
\end{enumerate}
\end{remark}

\section{Preliminaries}\label{sec:preparation}
In this section, we recall the notions of Hausdorff measure and Hausdorff dimension and then state the Hausdorff measure version of Thom's parametric transversality theorem, which will be the main tool in the proof of the main theorem.

Let $s$ be an arbitrary non-negative real number.
The \emph{$s$-dimensional Hausdorff outer measure} on $\R^n$ is defined as follows.
Let $B$ be a subset of $\R^n$.
The $0$-dimensional Hausdorff measure of $B$ is the counting measure of $B$; equivalently, it equals the number of points of $B$ if $B$ is finite, and $\infty$ otherwise.
For $s>0$, the $s$-dimensional Hausdorff outer measure of $B$ is defined by 
\begin{align*}
   \lim_{\delta \to 0} \mathcal{H}_{\delta}^s(B),
\end{align*}
where for each $0<\delta \leq \infty$, 
\begin{align*}
\mathcal{H}_{\delta}^s(B)=\inf\Set{\sum_{j=1}^\infty (\diam C_j)^s|B\subset \bigcup_{j=1}^\infty C_j,\  \diam C_j\leq \delta}.
\end{align*}
Here, for a subset $C$ of $\R^n$, we write 
\begin{align*}
\diam C=\sup \set{\|x-y\||x, y\in C}, 
\end{align*}
where $\|z\|$ denotes the Euclidean norm of $z\in \R^n$.
Note that the infimum in $\mathcal{H}_{\delta}^s(B)$ is over all coverings of $B$ by subsets $C_1, C_2,\ldots$ of $\R^n$ satisfying $\diam C_j\leq \delta$ for all positive integers $j$.

Let $s$ be an arbitrary non-negative real number.
Let $N$ be a $C^r$ manifold $(r\geq 1)$ of dimension $n$, and let $\set{(U_\lambda, \varphi_\lambda)}_{\lambda\in \Lambda}$ be a coordinate neighborhood system of $N$.
Then, a subset $\Sigma$ of $N$ has \emph{$s$-dimensional Hausdorff measure zero} in $N$ if for any $\lambda\in \Lambda$, the set $\varphi_\lambda(\Sigma \cap U_\lambda)$ has $s$-dimensional Hausdorff (outer) measure zero in $\R^n$.
Note that this definition does not depend on the choice of a coordinate neighborhood system of $N$.
Moreover, for a subset $\Sigma$ of $N$, define
\begin{align*}
\HD_N(\Sigma)=\inf\Set{s\in[0,\infty)|\text{$\Sigma$ has $s$-dimensional Hausdorff measure zero in $N$}},
\end{align*}
which is called the \emph{Hausdorff dimension of $\Sigma$} in $N$.
\begin{remark}\label{rem:HD_sharpness}
\upshape
Throughout the paper, we state our results mainly in terms of Hausdorff measure, since the corresponding statements for Hausdorff dimension follow immediately from the definition.

More precisely, let $s_0$ be a non-negative real number, and let $\Sigma$ be a subset of $N$.
If $\Sigma$ has $s$-dimensional Hausdorff measure zero in $N$ for any real number $s>s_0$, then
\begin{align}\label{eq:Hausdorff_inequality}
\HD_N(\Sigma)\le s_0.
\end{align}
Moreover, if $\Sigma$ does not have $s_0$-dimensional Hausdorff measure zero in $N$, then equality holds in \cref{eq:Hausdorff_inequality}.
In particular, the sharpness of a Hausdorff measure estimate with threshold $s>s_0$ implies the sharpness of the corresponding Hausdorff dimension estimate \cref{eq:Hausdorff_inequality}.
\end{remark}
Next, we introduce notation and definitions needed to state the main tool in the proof of the main theorem.

\begin{definition}\label{def:delta}
Let $X$ and $Y$ be $C^r$ manifolds, and let $Z$ be a $C^r$ submanifold of $Y$ ($r\geq1$).
Let $f : X\to Y$ be a $C^1$ mapping. 
For any $x\in X$, set  
\begin{align*}
\delta(f,x,Z)&=\left\{ \begin{array}{ll}
0 & \text{if $f(x)\notin Z$}, \\
\dim Y-\dim (df_x(T_xX)+T_{f(x)}Z) & \text{if $f(x)\in Z$}, \\
\end{array} \right.
\\ 
\delta(f,Z)&=\sup \set{\delta(f,x,Z)|x\in X}. 
\end{align*}
\end{definition}
In the $C^\infty$ case, this agrees with the definition in \cite[p.~230]{Mather1973}. 
As in \cite{Bruce2000}, $\delta(f,x,Z)$ measures the extent to which $f$ fails to be transverse to $Z$ at $x$.

\begin{definition}\label{def:W}
Let $X$, $A$, and $Y$ be $C^r$ manifolds ($r\geq1$), and let
$F:X\times A\to Y$ be a $C^1$ mapping.
For any $a\in A$, let $F_a:X\to Y$ be the mapping defined by $F_a(x)=F(x,a)$. 
Let $Z$ be a $C^r$ submanifold of $Y$.
Define  
\begin{align*}
\Sigma(F,Z)&=\set{a\in A|\text{$F_a$ is not transverse to $Z$}},
\\
W(F,Z)&=\set{(x,a)\in X\times A \mid \delta(F_a,x,Z)=\delta(F,(x,a),Z)>0}, 
\\
\delta^*(F,Z)&=\dim X+\dim Z-\dim Y +\delta(F,Z)
\\
&=\dim X-\codim Z+\delta(F,Z), 
\end{align*} 
where $\codim Z=\dim Y-\dim Z$.
\end{definition}
The following lemma is the main tool in the proof of the main theorem.
\begin{lemma}[\cite{Ichiki2022a}]\label{thm:main_lem}
Let $X$, $A$, and $Y$ be $C^r$ manifolds, let $Z$ be a $C^r$ submanifold of $Y$, and let $F:X\times A\to Y$ be a $C^r$ mapping, where $r$ is a positive integer.
Then the following statements hold.
\begin{enumerate}[$(1)$]
\item \label{thm:main_lem1}
Suppose $\delta^*(F,Z)\geq 0$.
Then, for any real number $s$ satisfying 
\begin{align}\label{eq:main_lem}
s\geq \dim A-1+\frac{\delta^*(F,Z)+1}{r}, 
\end{align}
the following $(\alpha)$ and $(\beta)$ are equivalent. 
\begin{enumerate}
\item [$(\alpha)$]
The set $\pi_A(W(F,Z))$ has $s$-dimensional Hausdorff measure zero in $A$, where $\pi_A: X\times A\to A$ is the natural projection.
\item [$(\beta)$] 
The set $\Sigma(F,Z)$ has $s$-dimensional Hausdorff measure zero in $A$.
\end{enumerate}
\item \label{thm:main_lem2}
Suppose $\delta^*(F,Z)<0$.
Then, the following hold:
\begin{enumerate}[\upshape(2a)]
\item \label{thm:main_lem2_w}
We have $W(F,Z)=\emptyset$.
\item \label{thm:main_lem2_h}
For any non-negative real number $s$ satisfying $s>\dim A+\delta^*(F,Z)$, the set $\Sigma(F,Z)$ has $s$-dimensional Hausdorff measure zero in $A$.
\item \label{thm:main_lem2_e}
For any $a\in A\setminus\Sigma(F,Z)$, we have $F_a(X) \cap Z=\emptyset$.
\end{enumerate}
\end{enumerate}
\end{lemma}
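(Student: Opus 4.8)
The engine and a reduction. The tool I would build the proof around is a quantitative (Hausdorff-measure) Morse--Sard theorem: for a $C^r$ map $g$ between manifolds of dimensions $m$ and $n$, the set $C$ where $\rank dg\le n-1$ satisfies $\mathcal H^{s}(g(C))=0$ for every $s\ge n-1+\frac{m-n+1}{r}$, and when $m<n$ the whole image satisfies $\mathcal H^{s}(g(M))=0$ for $s>m$ (a $C^1$ map does not raise Hausdorff dimension). Since ``$s$-dimensional Hausdorff measure zero'' is a local, countably additive notion, I would work in coordinate charts with $Y=\R^{\dim Y}$ and $Z=\rho^{-1}(0)$, where $\rho$ is the linear projection onto the last $\codim Z$ coordinates. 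Writing $\Phi=\rho\circ F$ and $\Phi_a=\rho\circ F_a$, transversality of $F_a$ to $Z$ at a point $x$ with $\Phi_a(x)=0$ becomes surjectivity of $d(\Phi_a)_x$, and in general $\delta(F_a,x,Z)=\codim Z-\rank d(\Phi_a)_x$ and $\delta(F,(x,a),Z)=\codim Z-\rank d\Phi_{(x,a)}$.

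The elementary assertions. The easy implication $(\beta)\Rightarrow(\alpha)$ in \cref{thm:main_lem1} is immediate, because every $(x,a)\in W(F,Z)$ has $F_a$ non-transverse to $Z$ at $x$, so $\pi_A(W(F,Z))\subseteq\Sigma(F,Z)$. For \cref{thm:main_lem2} I would use that $\delta^*(F,Z)<0$ together with $\delta(F,Z)\ge0$ forces $\dim X<\codim Z$ and $\delta(F,Z)<\codim Z-\dim X$. On the other hand, for any $(x,a)$ with $F(x,a)\in Z$ one always has $\delta(F_a,x,Z)\ge\codim Z-\dim X$, since $\dim\big(d(F_a)_x(T_xX)+T_{F_a(x)}Z\big)\le\dim X+\dim Z$. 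If $(x,a)$ lay in $W(F,Z)$ then $\codim Z-\dim X\le\delta(F_a,x,Z)=\delta(F,(x,a),Z)\le\delta(F,Z)$, a contradiction; this proves $W(F,Z)=\emptyset$, i.e.\ \cref{thm:main_lem2_w}. Finally $\dim X<\codim Z$ makes a transverse intersection point impossible, so transversality of $F_a$ to $Z$ forces $F_a(X)\cap Z=\emptyset$, which is \cref{thm:main_lem2_e}.

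The Hausdorff bound. The substance is the bound on $\Sigma(F,Z)\setminus\pi_A(W(F,Z))$, from which both \cref{thm:main_lem1} and \cref{thm:main_lem2_h} follow. I would stratify the zero set of $\Phi$ by the rank of the full differential, setting $P_j=\{(x,a):\Phi(x,a)=0,\ \rank d\Phi_{(x,a)}=j\}$ with $\codim Z-\delta(F,Z)\le j\le\codim Z$. On the locus where this rank is locally constant, the constant-rank theorem exhibits $P_j$ as a $C^r$ submanifold of dimension $\dim X+\dim A-j$ with $T_{(x,a)}P_j=\ke d\Phi_{(x,a)}$, and a short computation shows that $\pi_A|_{P_j}$ fails to be a submersion at $(x,a)$ precisely when $\rank d(\Phi_a)_x<j$; such points are non-transverse and lie off $W(F,Z)$, so $\Sigma(F,Z)\setminus\pi_A(W(F,Z))$ is covered by the critical values of the maps $\pi_A|_{P_j}\colon P_j\to A$. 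Applying the quantitative Morse--Sard theorem to each of these gives $s$-dimensional Hausdorff measure zero for $s\ge\dim A-1+\frac{(\dim X+\dim A-j)-\dim A+1}{r}=\dim A-1+\frac{\dim X-j+1}{r}$, whose worst case is the minimal rank $j=\codim Z-\delta(F,Z)$, namely the threshold $\dim A-1+\frac{\delta^*(F,Z)+1}{r}$ of \cref{eq:main_lem}. Writing $\Sigma=\pi_A(W)\cup(\Sigma\setminus\pi_A(W))$ and combining with hypothesis $(\alpha)$ yields the equivalence in \cref{thm:main_lem1}. When $\delta^*(F,Z)<0$, every stratum has dimension $\dim X+\dim A-j\le\dim A+\delta^*(F,Z)<\dim A$, so the full images satisfy $\mathcal H^{s}=0$ for $s>\dim A+\delta^*(F,Z)$ with no $1/r$ gain, which is \cref{thm:main_lem2_h}.

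The main obstacle. The delicate point is the rank-jump locus: the set $\{\rank d\Phi=j\}$ is only locally closed, and at points where the rank is not locally constant $P_j$ need not be a manifold, so the clean stratumwise Sard argument does not apply there verbatim. I expect the crux of the proof to be the control of the Hausdorff measure contributed by these degenerate points together with the verification that the exponent accounting never exceeds the worst-stratum value --- achieved either by a form of the quantitative Morse--Sard theorem that dispenses with any manifold structure on the source, or by processing the rank values in decreasing order so that each jump locus is absorbed into a lower-dimensional, higher-rank stratum. This is precisely what makes the threshold in \cref{eq:main_lem} sharp.
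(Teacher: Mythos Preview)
The paper does not prove this lemma: it is quoted from \cite{Ichiki2022a} as a black-box tool in \cref{sec:preparation}, and no argument for it appears anywhere in the present text. There is therefore nothing here to compare your sketch against.

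That said, your outline is consistent with how such refined parametric transversality results are typically established---localize so that $Z$ is a linear slice, stratify $\Phi^{-1}(0)$ by the rank of $d\Phi$, identify the critical set of $\pi_A$ on each stratum with the non-transverse locus off $W$, and feed each stratum into a Hausdorff-measure Morse--Sard theorem. Your treatment of the elementary pieces is correct: the containment $\pi_A(W(F,Z))\subseteq\Sigma(F,Z)$ gives $(\beta)\Rightarrow(\alpha)$; the inequality $\delta(F_a,x,Z)\ge\codim Z-\dim X$ at points of $Z$ combined with $\delta(F,Z)<\codim Z-\dim X$ forces $W(F,Z)=\emptyset$; and $\dim X<\codim Z$ rules out transverse intersection points, giving \cref{thm:main_lem2_e}. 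The obstacle you flag---that the rank-constant loci are only locally closed, so the stratumwise Sard argument needs care at rank jumps---is genuine and is exactly the sort of point a full proof must address; whether this is done by a more robust version of quantitative Sard or by a descending-rank induction is what you would have to check in \cite{Ichiki2022a} itself.
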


\section{Proof of the main theorem}\label{sec:main_proof}
Let $n=\dim X$.
Fix an integer $d$ with $2\le d\le d_f$.
Let $\Gamma : X^{(d)}\times \mathcal{L}(\R^{m},\R^{\ell}) \to (\R^{\ell})^d$ be the $C^r$ mapping given by 
\begin{align*}
\Gamma(q,\pi)=
\left(((g+\pi)\circ f)(q_1)\ld 
((g+\pi)\circ f)(q_d)\right), 
\end{align*}
where $q=(q_1\ld q_d)$.
We apply \cref{thm:main_lem} with $F=\Gamma$ and $Z=\Delta_d$.

First, we show that $\delta(\Gamma,\Delta_d)=0$.
Although the argument is essentially the same as in the proof of \cite[Theorem~2]{Ichiki2018C} or \cite[Theorem~2]{Ichiki2018T}, we include it here for the convenience of the reader.
To prove this, it is enough to show that 
\begin{align}\label{eq:transverse2}
\dim \left(\img d\Gamma_{(\wt{q}, \wt{\pi})}+
T_{\Gamma(\wt{q}, \wt{\pi})}\Delta_d \right)
=d\ell
\end{align}
for any $(\wt{q},\wt{\pi})\in X^{(d)}\times \mathcal{L}(\R^{m},\R^{\ell})$ satisfying $\Gamma(\wt{q},\wt{\pi})\in \Delta_d$.

Let $\set{(V_\lambda ,\varphi_\lambda )}_{\lambda \in \Lambda}$ be a coordinate neighborhood system of $X$. 
There exists a coordinate neighborhood $(V_{\wt{\lambda}_1}\times \cdots \times V_{\wt{\lambda}_d} \times \mathcal{L}(\R^{m},\R^{\ell}), \varphi_{\wt{\lambda}_1}\times \cdots \times \varphi_{\wt{\lambda}_d}\times \id)$ containing $(\wt{q}, \wt{\pi})$ of $X^{(d)}\times \mathcal{L}(\R^{m},\R^{\ell})$, where $\id:\mathcal{L}(\R^{m},\R^{\ell})\to \mathcal{L}(\R^{m},\R^{\ell})$ is the identity mapping, and $\varphi_{\wt{\lambda}_1}\times \cdots \times \varphi_{\wt{\lambda}_d}\times \id: V_{\wt{\lambda}_1}\times \cdots \times V_{\wt{\lambda}_d} \times \mathcal{L}(\R^{m},\R^{\ell})\to \varphi_{\wt{\lambda}_1}(V_{\wt{\lambda}_1})\times \cdots \times\varphi_{\wt{\lambda}_d}(V_{\wt{\lambda}_d})\times \mathcal{L}(\R^{m},\R^{\ell})$ is defined by $(\varphi_{\wt{\lambda}_1}\times \cdots \times \varphi_{\wt{\lambda}_d}\times \id)(q_1\ld q_d,\pi)=(\varphi_{\wt{\lambda}_1}(q_1)\ld \varphi_{\wt{\lambda}_d}(q_d), \id(\pi))$.
Let $x=(x_1\ld x_d)\in (\R^n)^d$ be a local coordinate on $\varphi_{\wt{\lambda}_1}(V_{\wt{\lambda}_1})\times \cdots \times\varphi_{\wt{\lambda}_d}(V_{\wt{\lambda}_d})$.  

Let $(a_{ij})_{1\leq i \leq \ell, 1\leq j \leq m}$ be a representing matrix of a linear mapping $\pi\in \mathcal{L}(\R^m, \R^\ell)$. 
Then, $(g+\pi)\circ f:X\to \R^\ell$ can be written as 
\begin{align}\label{eq:jet2}
(g+\pi)\circ f=
\biggl(g_1\circ f+\sum_{j=1}^{m}a_{1j}f_j 
\ld 
g_\ell \circ f+\sum_{j=1}^{m}a_{\ell j}f_j\biggr),
\end{align}
where $f=(f_1\ld f_m)$, $g=(g_1\ld g_\ell)$ and $(a_{11}\ld  a_{1m}\ld a_{\ell 1}\ld a_{\ell m})\in (\R^m)^\ell$.   
Hence, in these local coordinates, $\Gamma$ is given by
\begin{align*}
&\Gamma \circ \left(\varphi_{\wt{\lambda}_1}\times 
\cdots \times \varphi_{\wt{\lambda}_d}\times \id\right)^{-1}
(x_1\ld x_d,\pi)
\\
&=\left( (g+\pi) \circ f\circ \varphi_{\wt{\lambda}_1}^{-1}(x_1), 
(g+\pi) \circ f\circ \varphi_{\wt{\lambda}_2}^{-1}(x_2)
\ld
(g+\pi) \circ f\circ \varphi_{\wt{\lambda}_d}^{-1}(x_d) 
\right)
\\
&=\left(
g_1\circ \wt{f}(x_1)+\sum_{j=1}^m a_{1j}\wt{f}_j(x_1)
\ld 
g_\ell \circ \wt{f}(x_1)+\sum_{j=1}^m a_{\ell j}\wt{f}_j(x_1), \right.\\
&\hspace{23pt}g_1\circ \wt{f}(x_2)+\sum_{j=1}^m a_{1j}\wt{f}_j(x_2)
\ld 
g_\ell \circ \wt{f}(x_2)+\sum_{j=1}^m a_{\ell j}\wt{f}_j(x_2), 
\\
&\hspace{120pt}\cdots \cdots \cdots , 
\\
&\hspace{20pt}\left. 
g_1\circ \wt{f}(x_d)+\sum_{j=1}^m a_{1j}\wt{f}_j(x_d)
\ld 
g_\ell \circ \wt{f}(x_d)+\sum_{j=1}^m a_{\ell j}\wt{f}_j(x_d)
\right), 
\end{align*}  
where $\wt{f}(x_i)=(\wt{f}_1(x_i)\ld \wt{f}_m(x_i))=(f_1\circ \varphi_{\wt{\lambda}_i}^{-1}(x_i)\ld f_m\circ \varphi_{\wt{\lambda}_i}^{-1}(x_i))$ $(1\leq i\leq d)$. 

For simplicity, set $\wt{x}=(\varphi_{\wt{\lambda}_1}\times \cdots \times \varphi_{\wt{\lambda}_d})(\wt{q})$. 
The Jacobian matrix $J\Gamma_{(\wt{q},\wt{\pi})}$ of $\Gamma$ at $(\wt{q}, \wt{\pi})$ is given on the left below, and the block $B(x_i)$ is defined on the right.
\begin{align*}
J\Gamma_{(\wt{q}, \wt{\pi})}=
\left(
\begin{array}{@{\,\,\,}c@{\,\,\,\,\,}|@{\,\,\,\,\,}c@{\,\,\,}}
\ast & B(x_1) \\
\ast & B(x_2) \\
 \vdots & \vdots    \\ 
\ast & B(x_d) \\
\end{array}
\right)_{(x,\pi)=(\wt{x},\wt{\pi})},\ 
B(x_i)=
\left. 
\left(
\begin{array}{ccccccc}
\mathbf{b}(x_i)& & \bigzerol \\
\bigzerol &     \ddots   &    \\ 
&  &    \mathbf{b}(x_i)
\end{array}
\right)
\right\}
\,\text{$\ell$ rows},
\end{align*}
where $\mathbf{b}(x_i)=(\wt{f}_1(x_i)\ld \wt{f}_m(x_i))$. 
From the description of $T_{\Gamma(\wt{q},\wt{\pi})}\Delta_d$, it suffices to show that the matrix $M$ defined below has rank $d\ell$.
Also, note that there exist invertible matrices $Q_1$ and $Q_2$ satisfying the expression on the right below:
\begin{align*}
M=
\left(
\begin{array}{@{\,\,\,}c@{\,\,\,\,\,}|@{\,\,\,\,\,}c@{\,\,\,}}
E_\ell & B(x_1) \\
E_\ell & B(x_2) \\
 \vdots & \vdots    \\ 
E_\ell & B(x_d) \\
\end{array}
\right)_{x=\wt{x}},\ \ \ \ \ 
Q_1MQ_2&=
\left(
\begin{array}{@{\,\,\,}c@{\,\,\,\,}|@{\,\,\,\,}c@{\,\,\,}}
E_\ell & 0 \\
0 & B(x_2)- B(x_1) \\
 \vdots & \vdots    \\ 
0 & B(x_d)- B(x_1) \\
\end{array}
\right)_{x=\wt{x}},
\end{align*}
where $E_\ell$ is the $\ell\times \ell$ identity matrix.
From $d-1\leq d_f-1$ and the definition of $d_f$, we have  
\begin{align*}
\dim \sum_{i=2}^d\R\overrightarrow{\wt{f}(x_1)\wt{f}(x_i)}=d-1,
\end{align*}
where $x=\wt{x}$.
Hence, $\rank Q_1MQ_2=d\ell$, and thus $\rank M=d\ell$. 
Since we have shown that $\delta(\Gamma,\Delta_d)=0$, we also have 
\begin{align*}
\delta^*(\Gamma, \Delta_d)=\dim X^{(d)}-\codim \Delta_d. 
\end{align*}

We now prove \cref{thm:main}~\cref{thm:mainsub1}. 
Since $\dim X^{(d)}-\codim \Delta_d\geq 0$, we have $\delta^*(\Gamma, \Delta_d)\geq 0$. 
We also obtain  
\begin{align*}
s&\geq m\ell-1+\frac{\dim X^{(d)}-\codim \Delta_d+1}{r}
=m\ell-1+\frac{\delta^*(\Gamma,\Delta_d)+1}{r}. 
\end{align*}
Since $\delta(\Gamma,\Delta_d)=0$, we have $W(\Gamma,\Delta_d)=\emptyset$, and thus $(\alpha)$ of \cref{thm:main_lem}~\cref{thm:main_lem1} holds in this situation.
Therefore, by \cref{thm:main_lem}~\cref{thm:main_lem1}, the set $\Sigma(\Gamma, \Delta_d)$ has $s$-dimensional Hausdorff measure zero in $\mathcal{L}(\R^m,\R^\ell)$.
Since $\Sigma_d=\Sigma(\Gamma,\Delta_d)$, we have \cref{thm:main}~\cref{thm:mainsub1}.

Finally, we prove \cref{thm:main}~\cref{thm:mainsub2}.
Since $\dim X^{(d)}-\codim \Delta_d<0$, we have  $\delta^*(\Gamma, \Delta_d)<0$. 
Since 
\begin{align*}
s&> m\ell+\dim X^{(d)}-\codim \Delta_d
=m\ell+\delta^*(\Gamma, \Delta_d), 
\end{align*}
the set $\Sigma(\Gamma, \Delta_d)$ has $s$-dimensional Hausdorff measure zero by \cref{thm:main_lem}~\cref{thm:main_lem2_h}.
By \cref{thm:main_lem}~\cref{thm:main_lem2_e}, for any $\pi\in \mathcal{L}(\R^m,\R^\ell)\setminus\Sigma(\Gamma,\Delta_d)$, we have $\Gamma_\pi(X^{(d)})\cap\Delta_d=\emptyset$.
Since $\Sigma_d=\Sigma(\Gamma,\Delta_d)$, we have \cref{thm:main}~\cref{thm:mainsub2}.
\QED
\section{Applications}\label{sec:app}
In this section, we apply \cref{thm:main} in the three ranges $\ell \le \dim X$, $\dim X<\ell\le 2\dim X$, and $\ell>2\dim X$.
We thereby obtain results on normal crossings and injectivity.
In the last range, combining the injectivity result proved here with \cite{Ichiki2024b}, we further obtain an embedding result.
This recovers the refinement of Mather's stability theorem for generic projections described in \cref{sec:intro}.
\begin{definition}\label{def:appmain2}
\upshape 
Let $f:X\to \R^\ell$ be a $C^1$ mapping, where $X$ is a $C^r$ manifold $(r\geq 1)$.   
Then, $f$ is called a \emph{mapping with normal crossings} if for any integer $d\geq 2$, the mapping $f^{(d)}:X^{(d)}\to (\R^\ell)^d$ is transverse to $\Delta_d$. 
\end{definition}
We write $|S|$ for the cardinality of $S$.
\subsection{Normal crossings in the range \texorpdfstring{$\ell\le \dim X$}{ell <= dim X}}
\begin{theorem}\label{thm:coronormal_1}
Let $f$ be a $C^r$ injection of an $n$-dimensional $C^r$ manifold $X$ into an open subset $V$ of $\R^m$, and $g:V\to \R^\ell$ a $C^r$ mapping, where $n\ge \ell$ and $r$ is a positive integer.
Let $\Sigma$ be the set of $\pi\in \mathcal{L}(\R^{m},\R^\ell)$ for which there exists an integer $d$ satisfying $2\leq d\leq d_f$ such that $((g+\pi)\circ f)^{(d)}:X^{(d)}\to (\R^\ell)^d$ is not transverse to $\Delta_d$.
Then, for any real number $s$ satisfying
\begin{align}\label{eq:coronormal_1}
s\geq m\ell-1+\frac{d_f(n-\ell)+\ell+1}{r},   
\end{align}
the set $\Sigma$ has $s$-dimensional Hausdorff measure zero in $\mathcal{L}(\R^{m},\R^\ell)$. 
Moreover, if $(g+\pi)\circ f:X\to \R^\ell$ $(\pi\in \mathcal{L}(\R^{m},\R^\ell)\setminus\Sigma)$ has the property that $|((g+\pi)\circ f)^{-1}(y)|\leq d_f$ for every $y\in \R^\ell$, then $(g+\pi)\circ f$ is a $C^r$ mapping with normal crossings. 
\end{theorem}
\begin{remark}\label{rem:coronormal_1}
\upshape
In \cref{thm:coronormal_1}, if all manifolds and mappings are of class $C^\infty$, then we can replace \cref{eq:coronormal_1} by $s>m\ell-1$ by the same argument as in \cref{rem:main}~\cref{rem:infty}.
\end{remark}
\begin{proof}[Proof of \cref{thm:coronormal_1}]
Let $d$ be an integer satisfying $2\leq d \leq d_f$. 
As in \cref{thm:main}, set
\begin{align*}
\Sigma_d=\set{\pi\in \mathcal{L}(\R^m,\R^\ell)|\text{$((g+\pi)\circ f)^{(d)}$ is not transverse to $\Delta_d$}}.
\end{align*}
Since $n\ge \ell$, we obtain 
\begin{align*}
&\dim X^{(d)}-\codim \Delta_d=nd-\ell(d-1)=d(n-\ell)+\ell\ (\geq 0), 
\\ 
&d_f(n-\ell)+\ell \geq 
d(n-\ell)+\ell =\dim X^{(d)}-\codim \Delta_d. 
\end{align*} 
Thus, we also have 
\begin{align*}
s&\geq m\ell-1+\frac{d_f(n-\ell)+\ell+1}{r}
\geq m\ell-1+\frac{\dim X^{(d)}-\codim \Delta_d+1}{r}. 
\end{align*}
Hence, by \cref{thm:main}~\cref{thm:mainsub1}, $\Sigma_d$ has $s$-dimensional Hausdorff measure zero in $\mathcal{L}(\R^{m},\R^\ell)$.
Since $\Sigma=\bigcup_{d=2}^{d_f}\Sigma_d$, the set $\Sigma$ has $s$-dimensional Hausdorff measure zero in $\mathcal{L}(\R^{m},\R^\ell)$.

If $(g+\pi)\circ f:X\to \R^\ell$ ($\pi\in \mathcal{L}(\R^{m},\R^\ell)\setminus\Sigma$) has the property that $|((g+\pi)\circ f)^{-1}(y)|\leq d_f$ for every $y\in \R^\ell$, then for any integer $d$ satisfying $d>d_f$, we have 
\begin{align*}
((g+\pi) \circ f)^{(d)}(X^{(d)})\cap \Delta_d=\emptyset,
\end{align*}
which implies that $(g+\pi)\circ f$ is a mapping with normal crossings. 
\end{proof}
\begin{example}[an example of \cref{thm:coronormal_1}]\label{ex:coronormal_1}
\upshape 
In \cref{thm:coronormal_1}, take $X=V=\R$ and $f(x)=x$.
Let $g:\R\to \R$ be the $C^\infty$ function defined by $g(x)=0$.
Since $d_f=2$ by the definition of $f$, the set $\Sigma$ in \cref{thm:coronormal_1} is given by
\begin{align*}
\Sigma=\set{\pi\in \mathcal{L}(\R,\R)|\text{$(g+\pi)^{(2)}:\R^{(2)}\to \R^2$ is not transverse to $\Delta_2$}}.
\end{align*}
Then, for any $s>0$, the set $\Sigma$ has $s$-dimensional Hausdorff measure zero in $\mathcal{L}(\R,\R)$ by \cref{thm:coronormal_1} and \cref{rem:coronormal_1}.

On the other hand, by a direct calculation, we obtain
\begin{align*}
    \Sigma=\set{\pi\in\mathcal{L}(\R,\R)|\pi=0}.
\end{align*}
Since $\Sigma$ does not have $0$-dimensional Hausdorff measure zero in $\mathcal{L}(\R,\R)$, we cannot improve the assumption $s>0$, which means that in the $C^\infty$ case, \cref{eq:coronormal_1} (i.e. $s>m\ell-1$) is sharp in general.

This example also illustrates an advantage of Hausdorff measure estimates such as those in \cref{thm:coronormal_1}, as opposed to a mere Lebesgue measure statement.
We regard the exceptional set $\Sigma$ as a subset of $\R$ by identifying $\mathcal{L}(\R,\R)$ with $\R$.
Let $K$ be the Cantor set in $\R$.
Then, we have
\begin{align*}
\HD_{\R}(\Sigma) <\HD_{\R}(K)=\frac{\log 2}{\log 3}=0.63\cdots,
\end{align*}
which implies that $\Sigma$ is never equal to $K$.
A Lebesgue measure statement alone does not exclude the possibility that $\Sigma$ is equal to the Cantor set since it also has Lebesgue measure zero in $\R$.
On the other hand, \cref{thm:coronormal_1} in terms of Hausdorff measure guarantees that $\Sigma$ never coincides with a set of positive Hausdorff dimension such as the Cantor set.
\end{example}
As the following remark shows, in the $C^\infty$ case, the assumption \cref{eq:main1} of \cref{thm:main} (i.e. $s>m\ell-1$) is sharp in general. 
\begin{remark}\label{rem:main_evaluation_1}
    In \cref{thm:main}, let $f$ and $g$ be the functions defined in \cref{ex:coronormal_1}.
    Then \(\Sigma_2\) in \cref{thm:main} is given by
\begin{align*}
\Sigma_2=\set{\pi\in \mathcal{L}(\R,\R)|\text{$(g+\pi)^{(2)}:\R^{(2)}\to \R^2$ is not transverse to $\Delta_2$}}.
\end{align*}
    Since $\dim \R^{(2)}-\codim \Delta_2=1$ $(\ge 0)$, for any $s>0$,
the set $\Sigma_2$ has $s$-dimensional Hausdorff measure zero in $\mathcal{L}(\R,\R)$ by \cref{thm:main}~\cref{thm:mainsub1} and \cref{rem:main}~\cref{rem:infty}.
Since $\Sigma_2$ is equal to $\Sigma$ in \cref{ex:coronormal_1}, $\Sigma_2$ does not have $0$-dimensional Hausdorff measure zero in $\mathcal{L}(\R,\R)$.
Thus, we cannot improve the assumption $s>0$. 
\end{remark}
\subsection{Normal crossings in the range \texorpdfstring{$\dim X<\ell\le 2\dim X$}{dim X < ell <= 2 dim X}}
\begin{theorem}\label{thm:coronormal_2}
Let $f$ be a $C^r$ injection of an $n$-dimensional $C^r$ manifold $X$ into an open subset $V$ of $\R^m$, and $g:V\to \R^\ell$ a $C^r$ mapping, where $n<\ell\le 2n$ and $r$ is a positive integer. 
Let $\Sigma$ be the set of $\pi\in \mathcal{L}(\R^{m},\R^\ell)$ for which there exists an integer $d$ satisfying $2\leq d\leq d_f$ such that $((g+\pi)\circ f)^{(d)}:X^{(d)}\to (\R^\ell)^d$ is not transverse to $\Delta_d$.
Then, for any real number $s$ satisfying
\begin{align}\label{eq:coronormal_2}
s\geq m\ell-1+\frac{2n-\ell+1}{r},   
\end{align}
the set $\Sigma$ has $s$-dimensional Hausdorff measure zero in $\mathcal{L}(\R^{m},\R^\ell)$. 
Moreover, if $(g+\pi)\circ f:X\to \R^\ell$ $(\pi\in \mathcal{L}(\R^{m},\R^\ell)\setminus\Sigma)$ has the property that $|((g+\pi)\circ f)^{-1}(y)|\leq d_f$ for every $y\in \R^\ell$, then $(g+\pi)\circ f$ is a $C^r$ mapping with normal crossings. 
\end{theorem}
\begin{remark}\label{rem:coronormal_22}
\upshape
In \cref{thm:coronormal_2}, if all manifolds and mappings are of class $C^\infty$, then we can replace \cref{eq:coronormal_2} by $s>m\ell-1$ by the same argument as in \cref{rem:main}~\cref{rem:infty}.
\end{remark}
\begin{proof}[Proof of \cref{thm:coronormal_2}]
Let $d$ be an integer satisfying $2\leq d \leq d_f$. 
As in \cref{thm:main}, set
\begin{align*}
\Sigma_d=\set{\pi\in \mathcal{L}(\R^m,\R^\ell)|\text{$((g+\pi)\circ f)^{(d)}$ is not transverse to $\Delta_d$}}.
\end{align*}
Since $n<\ell \leq 2n$, we obtain 
\begin{align*}
&\dim X^{(d)}-\codim \Delta_d=nd-\ell(d-1)=d(n-\ell)+\ell\leq 2(n-\ell)+\ell=2n-\ell.
\end{align*} 
First, we consider the case $\dim X^{(d)}-\codim \Delta_d\geq 0$. 
Since 
\begin{align*}
s&\geq m\ell-1+\frac{2n-\ell+1}{r}
\geq m\ell-1+\frac{\dim X^{(d)}-\codim \Delta_d+1}{r}, 
\end{align*}
the set $\Sigma_d$ has $s$-dimensional Hausdorff measure zero in $\mathcal{L}(\R^{m},\R^\ell)$ by \cref{thm:main}~\cref{thm:mainsub1}.
Next, we consider the case $\dim X^{(d)}-\codim \Delta_d<0$. 
Since 
\begin{align*}
s&\geq m\ell-1+\frac{2n-\ell+1}{r}
> m\ell+\dim X^{(d)}-\codim \Delta_d, 
\end{align*}
the set $\Sigma_d$ has $s$-dimensional Hausdorff measure zero in $\mathcal{L}(\R^{m},\R^\ell)$ by \cref{thm:main}~\cref{thm:mainsub2a}.
Since $\Sigma=\bigcup_{d=2}^{d_f}\Sigma_d$, the set $\Sigma$ has $s$-dimensional Hausdorff measure zero in $\mathcal{L}(\R^{m},\R^\ell)$.

The latter assertion can be shown by the same argument as in the proof of \cref{thm:coronormal_1}.  
\end{proof}
\begin{example}[an example of \cref{thm:coronormal_2}]
\upshape
In \cref{thm:coronormal_2}, take $X=V=\R$ and $f(x)=x$.
Let $g:\R\to \R^2$ be the $C^\infty$ mapping defined by $g(x)=(x^2,x^2)$.
Since $d_f=2$ by the definition of $f$, the set $\Sigma$ in \cref{thm:coronormal_2} is given by
\begin{align*}
\Sigma=\set{\pi\in \mathcal{L}(\R,\R^2)|\text{$(g+\pi)^{(2)}:\R^{(2)}\to (\R^2)^2$ is not transverse to $\Delta_2$}}.
\end{align*}
Then, for any $s>1$, the set $\Sigma$ has $s$-dimensional Hausdorff measure zero in $\mathcal{L}(\R,\R^2)$ by \cref{thm:coronormal_2} and \cref{rem:coronormal_22}.
Set 
\begin{align*}
    B=\set{\pi=(\pi_1,\pi_2)\in\mathcal{L}(\R,\R^2)|\pi_1=\pi_2}.
\end{align*}
Then, we can easily obtain $B\subset \Sigma$.
Since $B$ does not have $1$-dimensional Hausdorff measure zero in $\mathcal{L}(\R,\R^2)$, neither does the exceptional set $\Sigma$.
Namely, we cannot improve the assumption $s>1$, which means that in the $C^\infty$ case, the assumption \cref{eq:coronormal_2} (i.e. $s>m\ell-1$) is sharp in general.

Moreover, by the latter assertion of \cref{thm:coronormal_2}, $g+\pi$ is a mapping with normal crossings for any $\pi\in \mathcal{L}(\R,\R^2)\setminus \Sigma$ since $|(g+\pi)^{-1}(y)|\le 2=d_f$ for any $y\in\R^2$.
Here $g$ itself is not a mapping with normal crossings. Nevertheless, \cref{thm:coronormal_2} guarantees that $g+\pi$ is a mapping with normal crossings for generic linear perturbations $\pi$.
\end{example}
\subsection{Injectivity in the range \texorpdfstring{$\ell>2\dim X$}{ell > 2 dim X}}
In \cite{Ichiki2018T}, the following consequence of \cref{thm:main_t} was obtained:
\begin{proposition}[\cite{Ichiki2018T}]\label{thm:injective_t}
Let $f$ be a $C^r$ injection of an $n$-dimensional $C^r$ manifold $X$ into an open subset $V$ of $\R^m$ and $g:V\to \R^\ell$ a $C^r$ mapping, where $\ell>2n$ and $r\geq 1$.
Then, the set 
\begin{align*}
    \Sigma=\set{\pi\in \mathcal{L}(\R^{m},\R^\ell)|\text{$(g+\pi) \circ f:X\to \R^\ell$ is not injective}}
\end{align*}
has Lebesgue measure zero in $\mathcal{L}(\R^{m},\R^\ell)$. 
\end{proposition}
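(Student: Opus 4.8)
The plan is to deduce this from \cref{thm:main} applied with $d=2$. The conceptual point is that a mapping $h\colon X\to\R^\ell$ is non-injective exactly when there are distinct points $q_1,q_2\in X$ with $h(q_1)=h(q_2)$, i.e.\ exactly when $h^{(2)}(X^{(2)})\cap\Delta_2\neq\emptyset$. Taking $h=(g+\pi)\circ f$, this says that $\Sigma$ consists precisely of those $\pi$ for which $((g+\pi)\circ f)^{(2)}(X^{(2)})$ meets $\Delta_2$.

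First I would verify that we land in the ``empty intersection'' case of \cref{thm:main}. Since $f$ is injective we have $d_f\ge 2$, so $d=2$ is admissible. We have $\dim X^{(2)}=2n$ and $\codim\Delta_2=\ell$, so the hypothesis $\ell>2n$ forces $\dim X^{(2)}-\codim\Delta_2=2n-\ell<0$; thus \cref{thm:main}~\cref{thm:mainsub2} applies with $d=2$. Writing $\Sigma_2$ for the bad set of \cref{thm:main} with $d=2$, part \cref{thm:mainsub2b} then gives $((g+\pi)\circ f)^{(2)}(X^{(2)})\cap\Delta_2=\emptyset$ for every $\pi\notin\Sigma_2$, hence $(g+\pi)\circ f$ is injective for such $\pi$; in other words $\Sigma\subseteq\Sigma_2$. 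On the other hand, since $2n-\ell<0$ we have $m\ell>m\ell+2n-\ell$, so $s=m\ell$ is admissible in \cref{eq:main2}, and part \cref{thm:mainsub2a} shows that $\Sigma_2$ has $m\ell$-dimensional Hausdorff measure zero, hence Lebesgue measure zero, in $\mathcal{L}(\R^m,\R^\ell)$. Combining the two facts, $\Sigma$ has Lebesgue measure zero. Equivalently, one can invoke \cref{thm:main_t} directly with $d=2$: its hypothesis \cref{eq:main_t_r} holds automatically here because $d(n-\ell)+\ell\le 2n-\ell<0<r$ for every $d\ge 2$; but then one must also note separately that transversality of $((g+\pi)\circ f)^{(2)}$ to $\Delta_2$ forces the image to avoid $\Delta_2$, since $\dim X^{(2)}<\codim\Delta_2$, which is exactly what \cref{thm:main}~\cref{thm:mainsub2b} packages for free.

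There is no serious obstacle here: the whole argument is the one-parameter ($d=2$) specialization of \cref{thm:main}. The only step deserving a word of care is the identification of ``$(g+\pi)\circ f$ non-injective'' with ``$((g+\pi)\circ f)^{(2)}(X^{(2)})$ meets $\Delta_2$'', together with the observation that in the present dimension range this is in turn equivalent to failure of transversality to $\Delta_2$. In particular, the values $d\ge 3$, and the quantity $d_f$, play no role in this proof.
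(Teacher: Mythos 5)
Your proposal is correct and follows essentially the same route as the paper: the paper does not reprove this cited proposition directly, but its proof of the refined version (\cref{thm:injective}) is exactly your argument --- apply \cref{thm:main} with $d=2$, use $\dim X^{(2)}-\codim\Delta_2=2n-\ell<0$ to get both the measure bound from \cref{thm:mainsub2a} and the identification $\Sigma=\Sigma_2$ from \cref{thm:mainsub2b} --- and your specialization $s=m\ell$ to recover the Lebesgue statement is the same reduction spelled out in \cref{rem:main}. Your side remarks (that $d\ge 3$ and $d_f$ are irrelevant here, and that non-transversality coincides with the image meeting $\Delta_2$ in this dimension range) are accurate.
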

The main theorem also yields the following refinement of \cref{thm:injective_t}.
\begin{theorem}\label{thm:injective}
Let $f$ be a $C^r$ injection of an $n$-dimensional $C^r$ manifold $X$ into an open subset $V$ of $\R^m$ and $g:V\to \R^\ell$ a $C^r$ mapping, where $\ell>2n$ and $r\geq 1$.
Set 
\begin{align*}
    \Sigma=\set{\pi\in \mathcal{L}(\R^{m},\R^\ell)|\text{$(g+\pi) \circ f:X\to \R^\ell$ is not injective}}.
\end{align*}
Then, for any real number $s$ satisfying 
\begin{align}\label{eq:injective}    
s>m\ell+2n-\ell,
\end{align}
the set $\Sigma$ has $s$-dimensional Hausdorff measure zero in $\mathcal{L}(\R^{m},\R^\ell)$. 
\end{theorem}
\begin{proof}[Proof of \cref{thm:injective}]
As in \cref{thm:main}, set
\begin{align*}
\Sigma_2=\set{\pi\in \mathcal{L}(\R^m,\R^\ell)|\text{$((g+\pi)\circ f)^{(2)}$ is not transverse to $\Delta_2$}}.
\end{align*}
Since $2n<\ell$, we obtain 
\begin{align*}
\dim X^{(2)}-\codim \Delta_2=2n-\ell<0. 
\end{align*} 
Thus, we have 
\begin{align*}
s>m\ell+2n-\ell=m\ell+\dim X^{(2)}-\codim \Delta_2.
\end{align*} 
Therefore, the following hold by \cref{thm:main}~\cref{thm:mainsub2}.
\begin{enumerate}[\upshape(a)]
    \item 
    The set $\Sigma_2$ has $s$-dimensional Hausdorff measure zero in $\mathcal{L}(\R^{m},\R^\ell)$.
    \item
    For any $\pi \in \mathcal{L}(\R^{m},\R^{\ell})\setminus\Sigma_2$, we have $((g+\pi) \circ f)^{(2)}(X^{(2)})\cap \Delta_2=\emptyset$. 
\end{enumerate}

By (b), we obtain $\Sigma=\Sigma_2$.
Therefore, (a) completes the proof.
\end{proof}
\begin{example}[an example of \cref{thm:injective}]\label{ex:injective}
\upshape
In \cref{thm:injective}, take $X=V=\R$ and $f(x)=x$.
Let $g:\R\to \R^\ell$ $(\ell \ge 3)$ be the $C^\infty$ mapping defined by 
\begin{align*}
g(x)=(-x^2,-x^3,0\ld 0).
\end{align*}
As in \cref{thm:injective}, set
\begin{align*}
\Sigma=\set{\pi\in \mathcal{L}(\R,\R^\ell)|\text{$g+\pi:\R\to \R^\ell$ is not injective}}.
\end{align*}
Then, for any $s$ satisfying $s>1\cdot \ell+2\cdot 1-\ell=2$, the set $\Sigma$ has $s$-dimensional Hausdorff measure zero in $\mathcal{L}(\R,\R^\ell)$ by \cref{thm:injective}.
Since $\Sigma$ does not have $2$-dimensional Hausdorff measure zero by the following argument, we cannot improve the assumption $s>2$, which means that \cref{eq:injective} is sharp in general.

Now, we show that the set $\Sigma$ does not have $2$-dimensional Hausdorff measure zero in $\mathcal{L}(\R,\R^\ell)$.
Let $\varphi:\R^{(2)}\to \R^2$ be the mapping defined by 
\begin{align*}
\varphi(x,\wt{x})=(x+\wt{x},x^2+x\wt{x}+\wt{x}^2).
\end{align*}
Fix $p:=(x',\wt{x}')\in \R^{(2)}$.
Since $\det J\varphi_p=\wt{x}'-x'\ne 0$, there exist open neighborhoods $U$ $(\subset \R^{(2)})$ of $p$ and $U'$ $(\subset \R^2)$ of $\varphi(p)$ such that $\varphi|_U:U\to U'$ is a diffeomorphism by the inverse function theorem.
Note that for $\pi\in \mathcal{L}(\R,\R^\ell)$, the mapping $g+\pi:\R\to\R^\ell$ can be expressed as follows:
\begin{align*}
    (g+\pi)(x)=\left(-x^2+a_1x,-x^3+a_2x,a_3x\ld a_\ell x\right),
\end{align*}
where $(a_i)_{1\le i\le \ell}$ is the representing matrix of $\pi$.
Now, we regard $\Sigma$ as a subset of $\R^\ell$ by identifying an element $\pi\in\mathcal{L}(\R,\R^\ell)$ with $(a_1\ld a_\ell)\in\R^\ell$.
Since 
\begin{align*}
    B:=U'\times\set{(0\ld 0)}\ (\subset \R^2\times \R^{\ell-2}=\R^\ell)
\end{align*}
    does not have $2$-dimensional Hausdorff measure zero in $\R^\ell$, it suffices to show that $B\subset \Sigma$.

Let $a:=(a_1,a_2,a_3\ld a_\ell)\in B$ be any element.
Then, there exists an element $(x,\wt{x})\in U$ such that $\varphi(x,\wt{x})=(a_1,a_2)$.
We denote the $j$-th component of $g+\pi$ by $(g+\pi)_j$.
Then, we have
\begin{align*}
    (g+\pi)_1(x)-(g+\pi)_1(\wt{x})&=(\wt{x}-x)(x+\wt{x}-a_1)=0,
    \\
    (g+\pi)_2(x)-(g+\pi)_2(\wt{x})&=(\wt{x}-x)(x^2+x\wt{x}+\wt{x}^2-a_2)=0.
\end{align*}
Note that the last equality in each equation above follows from $\varphi(x,\wt{x})=(a_1,a_2)$.
Since $a_3=\cdots =a_\ell=0$, each $(g+\pi)_j$ $(3\le j\le \ell)$ is identically zero.
Therefore, $g+\pi$ is not injective, and thus $a\in \Sigma$.

This example also illustrates the advantage of \cref{thm:injective} over \cref{thm:injective_t} when $\ell\geq 4$.
Since any $3$-dimensional smooth submanifold of $\mathcal{L}(\R,\R^\ell)$ has Lebesgue measure zero, \cref{thm:injective_t} alone does not exclude the possibility that the exceptional set has Hausdorff dimension $3$.
By \cref{thm:injective}, however, we have $\HD_{\mathcal{L}(\R,\R^\ell)}(\Sigma)\leq 2$, so the exceptional set can never coincide with such a set.
\end{example}
As the following remark shows, the assumption \cref{eq:main2} of \cref{thm:main} is sharp in general. 
\begin{remark}\label{rem:main_evaluation_2}
    In \cref{thm:main}, let $f$ and $g$ be the mappings defined in \cref{ex:injective}.
    Then, $\Sigma_2$ in \cref{thm:main} is expressed as follows:
\begin{align*}
\Sigma_2=\set{\pi\in \mathcal{L}(\R,\R^\ell)|\text{$(g+\pi)^{(2)}$ is not transverse to $\Delta_2$}}.
\end{align*}
Since $\dim \R^{(2)}-\codim \Delta_2=2-\ell<0$, we have the following by \cref{thm:main}~\cref{thm:mainsub2}.
\begin{enumerate}
    \item [(a)]
    For any $s>2$, the set $\Sigma_2$ has $s$-dimensional Hausdorff measure zero in $\mathcal{L}(\R,\R^\ell)$.
    \item [(b)]
     For any $\pi \in \mathcal{L}(\R,\R^\ell)\setminus\Sigma_2$, we have $(g+\pi)^{(2)}(\R^{(2)})\cap \Delta_2=\emptyset$. 
\end{enumerate}
Since $\Sigma_2$ is equal to the set $\Sigma$ in \cref{ex:injective} by (b), we cannot improve the assumption $s>2$, which implies that \cref{eq:main2} is sharp in general.
\end{remark}
\subsection{Embeddings and a refinement of Mather's theorem in the range \texorpdfstring{$\ell>2\dim X$}{ell > 2 dim X}}
We recall Mather's stability theorem for generic projections in the range $\ell>2\dim X$.
\begin{proposition}[\cite{Mather1973}]\label{thm:mather_embedding}
Let $f$ be a $C^\infty$ embedding of a compact $C^\infty$ manifold $X$ into $\R^m$ and $\ell$ an integer satisfying $\ell>2\dim X$.
Then, the set
\begin{align*}
    \Sigma=\set{\pi\in \mathcal{L}(\R^{m},\R^\ell)|\text{$\pi\circ f:X\to \R^\ell$ is not an embedding}}
\end{align*}
has Lebesgue measure zero in $\mathcal{L}(\R^{m},\R^\ell)$.
\end{proposition}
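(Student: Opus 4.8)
The plan is to derive \cref{thm:mather_embedding} from two genericity statements of different natures --- one producing injectivity, one producing immersivity --- together with the elementary fact that an injective $C^\infty$ immersion of a compact manifold is an embedding. The first step is this reduction. Set $n=\dim X$; we may assume $n\ge 1$, the case $n=0$ being immediate. For $\pi\in\mathcal{L}(\R^m,\R^\ell)$ write $h_\pi=\pi\circ f$. Since $X$ is compact, $h_\pi$ is an embedding precisely when it is at once injective and an immersion, because a continuous injection from a compact space into a Hausdorff space is automatically a homeomorphism onto its image. Hence
\begin{align*}
\Sigma&= \Sigma_{\mathrm{inj}}\cup\Sigma_{\mathrm{imm}},\\
\Sigma_{\mathrm{inj}}&=\set{\pi\in\mathcal{L}(\R^m,\R^\ell)|\mbox{$h_\pi$ is not injective}},\\
\Sigma_{\mathrm{imm}}&=\set{\pi\in\mathcal{L}(\R^m,\R^\ell)|\mbox{$h_\pi$ is not an immersion}},
\end{align*}
and it suffices to show that $\Sigma_{\mathrm{inj}}$ and $\Sigma_{\mathrm{imm}}$ each have Lebesgue measure zero in $\mathcal{L}(\R^m,\R^\ell)\cong\R^{m\ell}$.

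For $\Sigma_{\mathrm{inj}}$ I would apply \cref{thm:injective} directly, with $V=\R^m$, $g=0$ and $r=1$ (the $C^\infty$ embedding $f$ is in particular a $C^1$ injection). Since $\ell>2n$, the number $m\ell+2n-\ell$ is strictly smaller than $m\ell$, so $s=m\ell$ satisfies \cref{eq:injective}; therefore $\Sigma_{\mathrm{inj}}$ has $m\ell$-dimensional Hausdorff measure zero, and in particular Lebesgue measure zero, in $\mathcal{L}(\R^m,\R^\ell)$.

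For $\Sigma_{\mathrm{imm}}$ I would invoke the refined transversality theorem on $1$-jet extensions of \cite{Ichiki2024b}, specialized to $V=\R^m$ and $g=0$ (so that $(g+\pi)\circ f=h_\pi$), observing that $f$, being an embedding, is the immersion required by that theorem. For each $i\ge1$ let $\Sigma^i=\Sigma^i(X,\R^\ell)\subset J^1(X,\R^\ell)$ be the submanifold of $1$-jets of corank $i$; its codimension in $J^1(X,\R^\ell)$ is $i(\ell-n+i)$, which, since $\ell>2n$, exceeds $n=\dim X$ for every $i\ge1$. Hence a $C^1$ map $h:X\to\R^\ell$ whose $1$-jet extension $j^1h$ is transverse to $\Sigma^i$ for all $i$ must avoid $\bigcup_{i\ge1}\Sigma^i$ and so be an immersion, and only finitely many $i$ are relevant. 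Applying the cited theorem to each $Z=\Sigma^i$ shows that the set of $\pi$ for which $j^1h_\pi$ fails to be transverse to some $\Sigma^i$ --- a set containing $\Sigma_{\mathrm{imm}}$ --- has Hausdorff dimension strictly below $m\ell$, hence Lebesgue measure zero. Combining this with the previous paragraph completes the proof.

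I expect the immersion step to be the main obstacle: one must fix the stratification of $J^1(X,\R^\ell)$, verify that the codimension bound $i(\ell-n+i)>n$ really turns transversality into disjointness (the second place where the hypothesis $\ell>2n$ is used), and confirm that the hypotheses of the $1$-jet transversality theorem of \cite{Ichiki2024b} hold here. The injectivity step, by contrast, is a one-line consequence of \cref{thm:injective}; a dimension count for $\Sigma_{\mathrm{imm}}$ shows it is even smaller, so the injectivity contribution $m\ell+2n-\ell$ is the dominant one --- this is exactly the Hausdorff-dimension bound of the generalization announced in \cref{sec:intro}, which one obtains by running the same two-part argument with an arbitrary $C^r$ mapping $g$ in place of $0$ and a finite $r$.
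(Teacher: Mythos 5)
Your proposal is correct and takes essentially the same route as the paper: although the paper states this proposition as a citation of Mather, it derives the generalization \cref{thm:embedding} (whose special case $g=0$, $r=\infty$ is exactly this statement) precisely by your decomposition, handling the injectivity part with \cref{thm:injective} and the immersion part with the result of \cite{Ichiki2024b} quoted as \cref{thm:immersion}, then using that an injective immersion of a compact manifold is an embedding. The only cosmetic difference is that you re-sketch the jet-transversality and corank-stratum codimension argument behind \cref{thm:immersion}, whereas the paper invokes that theorem as a black box.
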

We also recall the following result on compositions of immersions with generic linear perturbations:
\begin{theorem}[\cite{Ichiki2024b}]\label{thm:immersion}
Let $f$ be a $C^r$ immersion of an $n$-dimensional $C^r$ manifold $X$ into an open subset $V$ of $\R^{m}$ and $g:V\to \R^{\ell}$ a $C^r$ mapping, where $\ell \geq2n$ and $r\geq 2$. 
Set
\begin{align*}
\Sigma=\set{\pi\in \mathcal{L}(\R^m,\R^\ell)|\text{$(g+\pi) \circ f:X\to \R^\ell$ is not an immersion}}.
\end{align*}
Then, for any real number $s$ satisfying 
\begin{align*}
s>m\ell+(2n-\ell-1), 
\end{align*}
the set $\Sigma$ has $s$-dimensional Hausdorff measure zero in $\mathcal{L}(\R^m,\R^\ell)$.
\end{theorem}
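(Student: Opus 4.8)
The plan is to detect the failure of $(g+\pi)\circ f$ to be an immersion as a transversality condition on its $1$-jet extension, and then to invoke \cref{thm:main_lem} in the regime $\delta^{*}<0$, in complete parallel with the proof of \cref{thm:main}~\cref{thm:mainsub2}, with the diagonal $\Delta_d$ replaced by the corank strata of the $1$-jet bundle. Write $n=\dim X$ and let $Y=J^{1}(X,\R^{\ell})$, a $C^{r-1}$ manifold of dimension $n+\ell+n\ell$, regarded as a fiber bundle over $X$ via the source map. For each integer $i$ with $1\le i\le n$, let $S_{i}\subset Y$ be the set of $1$-jets whose derivative part has corank exactly $i$; thus $S_{i}$ is the total space of the subbundle of $Y\to X$ whose fiber is the smooth locally closed set of $\ell\times n$ matrices of corank $i$, so that $S_{i}$ is a $C^{r-1}$ submanifold of $Y$ with $\codim S_{i}=i(\ell-n+i)$. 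A $C^{1}$ mapping $h\colon X\to\R^{\ell}$ fails to be an immersion precisely when $j^{1}h(X)$ meets $\bigcup_{i=1}^{n}S_{i}$.

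Next I would introduce the family $F\colon X\times\mathcal{L}(\R^{m},\R^{\ell})\to Y$ given by $F(x,\pi)=j^{1}\bigl((g+\pi)\circ f\bigr)(x)$. Because $f$ and $g$ are $C^{r}$, the mapping $F$ is $C^{r-1}$, and since $r\ge 2$ one may apply \cref{thm:main_lem} with $r-1$ in place of $r$. The one genuine computation is that $F$ is transverse to each $S_{i}$, that is, $\delta(F,S_{i})=0$: in a local trivialization the derivative part of $F(x,\pi)$ equals $dg_{f(x)}\circ df_{x}+\pi\circ df_{x}$, which, because $f$ is an immersion and hence $df_{x}$ is injective, sweeps out every $\ell\times n$ matrix as $\pi$ ranges over $\mathcal{L}(\R^{m},\R^{\ell})$; thus $F$ is submersive onto the derivative factor of the jet bundle, and since the defining condition of $S_{i}$ involves only that factor, $F$ is transverse to $S_{i}$. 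This is the immersion analogue of the rank identity $\rank M=d\ell$ in the proof of \cref{thm:main}, and it is the step I expect to require the most care — in particular, choosing coordinates on $Y$ in which $S_{i}$ is manifestly fibred over $X$, and keeping track of differentiability classes in the non-smooth case $r<\infty$.

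Granting $\delta(F,S_{i})=0$, we obtain $\delta^{*}(F,S_{i})=\dim X-\codim S_{i}=n-i(\ell-n+i)$, which for every $i\ge 1$ is at most $n-(\ell-n+1)=2n-\ell-1<0$ because $\ell\ge 2n$. Hence \cref{thm:main_lem}~\cref{thm:main_lem2} applies to each $S_{i}$: for every real $s>\dim\mathcal{L}(\R^{m},\R^{\ell})+\delta^{*}(F,S_{i})=m\ell+n-\codim S_{i}$ the set $\Sigma_{i}:=\Sigma(F,S_{i})$ has $s$-dimensional Hausdorff measure zero in $\mathcal{L}(\R^{m},\R^{\ell})$, and by \cref{thm:main_lem}~\cref{thm:main_lem2_e}, for every $\pi\notin\Sigma_{i}$ we have $F_{\pi}(X)\cap S_{i}=\emptyset$. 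Since $\codim S_{i}$ is nondecreasing in $i$, the single condition $s>m\ell+(2n-\ell-1)$ already forces $s>m\ell+n-\codim S_{i}$ for all $i$.

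Finally, put $\Sigma'=\bigcup_{i=1}^{n}\Sigma_{i}$, a finite union, so $\Sigma'$ has $s$-dimensional Hausdorff measure zero in $\mathcal{L}(\R^{m},\R^{\ell})$ whenever $s>m\ell+(2n-\ell-1)$. If $\pi\notin\Sigma'$, then $j^{1}\bigl((g+\pi)\circ f\bigr)(X)$ misses every $S_{i}$, hence misses $\bigcup_{i=1}^{n}S_{i}$, so $(g+\pi)\circ f$ is an immersion; therefore $\Sigma\subseteq\Sigma'$, and the asserted bound on the Hausdorff measure of $\Sigma$ follows. As in the proof of \cref{thm:injective} one in fact has $\Sigma=\Sigma'$, but the inclusion suffices.
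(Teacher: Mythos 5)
This theorem is quoted from \cite{Ichiki2024b} and not proved in the present paper, so there is no in-paper proof to compare against; your argument is nonetheless a correct reconstruction of the route that reference takes: apply the refined parametric transversality theorem (\cref{thm:main_lem}) to the $1$-jet family $F(x,\pi)=j^1\bigl((g+\pi)\circ f\bigr)(x)$ and the corank strata $S_i$, verify $\delta(F,S_i)=0$ from the injectivity of $df_x$ (so that $\pi\mapsto\pi\circ df_x$ surjects onto $\mathcal{L}(T_xX,\R^\ell)$), and note that $\delta^*(F,S_i)=n-i(\ell-n+i)\le 2n-\ell-1<0$, so \cref{thm:main_lem}~\cref{thm:main_lem2} yields the stated bound from the worst stratum $i=1$. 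You also correctly handle the two points that genuinely need care, namely that $F$ and the $S_i$ live only in the $C^{r-1}$ category (whence the hypothesis $r\ge 2$) and that \cref{thm:main_lem}~\cref{thm:main_lem2_e} is what upgrades transversality to each $S_i$ into $F_\pi(X)\cap S_i=\emptyset$, i.e.\ into the immersion property.
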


By combining \cref{thm:immersion,thm:injective}, we obtain the following result.
\begin{theorem}\label{thm:injective_immersion}
Let $f$ be a $C^r$ injective immersion of an $n$-dimensional $C^r$ manifold $X$ into an open subset $V$ of $\R^m$ and $g:V\to \R^\ell$ a $C^r$ mapping, where $\ell>2n$ and $r\geq 2$.
Set 
\begin{align*}
    \Sigma=\set{\pi\in \mathcal{L}(\R^{m},\R^\ell)|\text{$(g+\pi)\circ f:X\to \R^\ell$ is not an injective immersion}}.
\end{align*}
Then, for any real number $s$ satisfying 
\begin{align}\label{eq:injective_immersion}
s>m\ell+2n-\ell,
\end{align}
the set $\Sigma$ has $s$-dimensional Hausdorff measure zero in $\mathcal{L}(\R^{m},\R^\ell)$.
\end{theorem}
\begin{remark}
\upshape
\cref{ex:injective} also shows that \cref{eq:injective_immersion} in \cref{thm:injective_immersion} is sharp in general.
\end{remark}
If $X$ is compact, then an injective immersion $(g+\pi)\circ f$ is an embedding (see \cite[p.\ 11]{Golubitsky1973}).
Thus, \cref{thm:injective_immersion} immediately yields the following generalization of \cref{thm:mather_embedding}:
\begin{corollary}\label{thm:embedding}
Let $f$ be a $C^r$ embedding of an $n$-dimensional compact $C^r$ manifold $X$ into an open subset $V$ of $\R^m$ and $g:V\to \R^\ell$ a $C^r$ mapping, where $\ell>2n$ and $r\geq 2$.
Set 
\begin{align*}
    \Sigma=\set{\pi\in \mathcal{L}(\R^{m},\R^\ell)|\text{$(g+\pi)\circ f:X\to \R^\ell$ is not an embedding}}.
\end{align*}
Then, for any real number $s$ satisfying 
\begin{align}\label{eq:embedding}
s>m\ell+2n-\ell,
\end{align}
the set $\Sigma$ has $s$-dimensional Hausdorff measure zero in $\mathcal{L}(\R^{m},\R^\ell)$.
\end{corollary}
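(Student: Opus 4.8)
The plan is to obtain \cref{thm:embedding} as an immediate consequence of \cref{thm:injective immersion}, the only substantive ingredient being the classical fact that an injective immersion from a compact manifold is an embedding.

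First I would check that the hypotheses of \cref{thm:injective immersion} are met. Since $f$ is a $C^r$ embedding, it is in particular a $C^r$ injective immersion; moreover $g:V\to\R^\ell$ is $C^r$ with $\ell>2n$ and $r\ge 2$, and for each $\pi\in\mathcal{L}(\R^m,\R^\ell)$ the mapping $(g+\pi)\circ f:X\to\R^\ell$ is $C^r$. Thus \cref{thm:injective immersion} applies, and it yields that the set
\begin{align*}
\Sigma'=\set{\pi\in \mathcal{L}(\R^{m},\R^\ell)|\mbox{$(g+\pi)\circ f:X\to \R^\ell$ is not an injective immersion}}
\end{align*}
has $s$-dimensional Hausdorff measure zero in $\mathcal{L}(\R^m,\R^\ell)$ for every real number $s$ satisfying $s>m\ell+2n-\ell$.

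Next I would show $\Sigma=\Sigma'$, and this is where the compactness of $X$ enters. On the one hand, every embedding is an injective immersion, so if $(g+\pi)\circ f$ is an embedding then it is an injective immersion; hence $\mathcal{L}(\R^m,\R^\ell)\setminus\Sigma\subset\mathcal{L}(\R^m,\R^\ell)\setminus\Sigma'$. On the other hand, if $(g+\pi)\circ f$ is an injective immersion, then since $X$ is compact it is proper and a homeomorphism onto its image, hence an embedding (see \cite[p.\ 11]{Golubitsky1973}); this gives the reverse inclusion of the complements. Therefore $\Sigma=\Sigma'$.

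Combining the two steps, for every real number $s$ with $s>m\ell+2n-\ell$ the set $\Sigma$ has $s$-dimensional Hausdorff measure zero in $\mathcal{L}(\R^m,\R^\ell)$, which is precisely \cref{eq:embedding}. I do not expect any genuine obstacle here: the argument is a formal reduction to \cref{thm:injective immersion}, and its single nontrivial point, the implication ``injective immersion $\Rightarrow$ embedding'' for compact $X$, is a standard fact.
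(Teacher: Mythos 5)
Your argument is correct and coincides with the paper's own proof: the corollary is deduced from \cref{thm:injective immersion} by observing that, for compact $X$, an injective immersion is an embedding (the reference to \cite[p.\ 11]{Golubitsky1973} is exactly the one the paper uses), so $\Sigma$ equals the bad set of \cref{thm:injective immersion}. Nothing further is needed.
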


As noted in \cref{sec:intro}, the special case $g=0$ and $r=\infty$ of the Lebesgue measure statement in \cref{thm:embedding} recovers \cref{thm:mather_embedding}.
\begin{example}[an example of \cref{thm:embedding}]\label{ex:embedding}
\upshape
Let $X$ be a $1$-dimensional compact $C^\infty$ submanifold of $\R^2$ such that there exists $\ep>0$ satisfying
\begin{align*}
    I:=\set{(x_1,0)\in\R^2|-\ep <x_1<\ep}\subset X.
\end{align*}
Let $f:X\to \R^2$ be the inclusion, and let $g:\R^2\to\R^\ell$ $(\ell\ge 3)$ be the mapping defined by
\begin{align*}
g(x_1,x_2)=(-x_1^2,-x_1^3,0\ld 0).
\end{align*}
As in \cref{thm:embedding}, set
\begin{align*}
\Sigma=\set{\pi\in \mathcal{L}(\R^2,\R^\ell)|\text{$(g+\pi)\circ f:X\to \R^\ell$ is not an embedding}}.
\end{align*}
By \cref{thm:embedding}, for any $s$ satisfying $s>2\ell+2\cdot 1-\ell=\ell+2$, the set $\Sigma$ has $s$-dimensional Hausdorff measure zero in $\mathcal{L}(\R^2,\R^\ell)$.
Since $\Sigma$ does not have $(\ell+2)$-dimensional Hausdorff measure zero by the following argument, we cannot improve the assumption $s>\ell+2$, which means that \cref{eq:embedding} is sharp in general.

We now show that the set $\Sigma$ does not have $(\ell+2)$-dimensional Hausdorff measure zero in $\mathcal{L}(\R^2,\R^\ell)$.
Let $\varphi:I^{(2)}\to \R^2$ be the mapping defined by 
\begin{align*}
\varphi((x_1,0),(\wt{x}_1,0))=(x_1+\wt{x}_1,x_1^2+x_1\wt{x}_1+\wt{x}_1^2).
\end{align*}
Fix $p:=((x'_1,0),(\wt{x}_1',0))\in I^{(2)}$.
Since $\det J\varphi_p=\wt{x}_1'-x_1'\ne 0$, there exist open neighborhoods $U$ $(\subset I^{(2)})$ of $p$ and $U'$ $(\subset \R^2)$ of $\varphi(p)$ such that $\varphi|_U:U\to U'$ is a diffeomorphism by the inverse function theorem.
Note that for $\pi\in \mathcal{L}(\R^2,\R^\ell)$, the mapping $g+\pi:\R^2\to\R^\ell$ can be expressed as follows:
\begin{align*}
    &(g+\pi)(x_1,x_2)
    \\
    &=\left(-x_1^2+a_{11}x_1+a_{12}x_2,-x_1^3+a_{21}x_1+a_{22}x_2,a_{31}x_1+a_{32}x_2\ld a_{\ell 1}x_1+a_{\ell 2}x_2\right),
\end{align*}
where $(a_{ij})_{1\le i\le \ell,1\le j\le 2}$ is the representing matrix of $\pi$.
We identify $\mathcal{L}(\R^2,\R^\ell)$ with the set:
\begin{align*}
\set{(a_{11},a_{21},a_{31}\ld a_{\ell 1},a_{12}\ld a_{\ell 2})|a_{ij}\in \R}.
\end{align*}
Set 
\begin{align*}
    B=\set{(a_{11},a_{21},a_{31}\ld a_{\ell 1},a_{12}\ld a_{\ell 2})\in \R^{2\ell}|(a_{11},a_{21})\in U', a_{31}=\cdots =a_{\ell 1}=0}.
\end{align*}
Since $B$ does not have $(\ell+2)$-dimensional Hausdorff measure zero, it suffices to show that $B\subset \Sigma$.

Let $a:=(a_{11},a_{21}\ld a_{\ell 2})\in B$ be any element.
Since $(a_{11},a_{21})\in U'$, there exists an element $((x_1,0),(\wt{x}_1,0))\in U$ such that 
\begin{align}\label{eq:phi}
    \varphi((x_1,0),(\wt{x}_1,0))=(a_{11},a_{21}).
\end{align}
We denote the $j$-th component of $g+\pi$ by $(g+\pi)_j$.
Then, we obtain
\begin{align*}
    (g+\pi)_1(f(x_1,0))-(g+\pi)_1(f(\wt{x}_1,0))&=(\wt{x}_1-x_1)(x_1+\wt{x}_1-a_{11})=0,
    \\
    (g+\pi)_2(f(x_1,0))-(g+\pi)_2(f(\wt{x}_1,0))&=(\wt{x}_1-x_1)(x_1^2+x_1\wt{x}_1+\wt{x}_1^2-a_{21})=0.
\end{align*}
Note that the last equality in each equation above follows from \cref{eq:phi}.
Moreover, since $a_{31}=\cdots =a_{\ell 1}=0$, for each $j$ $(3\le j\le \ell)$ we have  
\begin{align*}
(g+\pi)_j(f(x_1,0))=(g+\pi)_j(f(\wt{x}_1,0))=0.
\end{align*}
Therefore, $(g+\pi)\circ f$ is not injective, and thus $a\in \Sigma$.
\end{example}

\section*{Acknowledgements}
The author was supported by JSPS KAKENHI Grant Numbers JP26K06776 and JP21K13786. 
This work was supported by the Research Institute for Mathematical Sciences, an International Joint Usage/Research Center located in Kyoto University.
\bibliographystyle{plain}
\bibliography{main}
\end{document}